\newlist{hyp0}{enumerate}{1}
\setlist[hyp0]{label=(H\arabic*)}
\numberwithin{equation}{section}
\newtheorem{theorem}{Theorem}
\newtheorem*{theorem*}{Theorem}
\newtheorem{lemma}{Lemma}[section]
\newtheorem{example}[lemma]{Example}
\newtheorem{definition}[lemma]{Definition}
\newtheorem{proposition}[lemma]{Proposition}
\newtheorem{remark}[lemma]{Remark}
\newtheorem{assumption}[lemma]{Assumption}
\begin{document}

\newcommand{\A}{\mathbb{A}}
\newcommand{\C}{\mathbb{C}}
\newcommand{\D}{\mathbb{D}}
\newcommand{\R}{\mathbb{R}}
\newcommand{\ER}{\mathbb{R}}
\newcommand{\Q}{\mathbb{Q}}
\newcommand{\Z}{\mathbb{Z}}
\newcommand{\N}{\mathbb{N}}
\newcommand{\EE}{\mathbb{E}}
\newcommand{\ES}{\mathbb{E}}
\newcommand{\PP}{\mathbb{P}}
\newcommand{\Pcal}{\mathcal{P}}
\newcommand{\Tcal}{\mathcal{T}}
\newcommand{\Rcal}{\mathcal{R}}
\newcommand{\PE}{\mathbb{P}}
\newcommand{\dd}{\textrm{d}}
\newcommand{\usn}{\mathbf{u}_s^n}
\newcommand{\tRoot}{\sigma}

\newcommand{\Tc}{\mathcal{T}}
\newcommand{\Om}{\Omega}
\newcommand{\Omb}{\overline \Omega}
\newcommand{\om}{\omega}
\newcommand{\omb}{\bar \omega}
\newcommand{\Fc}{\mathcal{F}}
\newcommand{\F}{\mathbb{F}}
\newcommand{\T}{\mathbb{T}}
\newcommand{\V}{\mathbb{V}}
\newcommand{\Ac}{\mathcal{A}}
\newcommand{\x}{\times}
\newcommand{\Fbb}{\overline \F}
\newcommand{\Fcb}{\overline \Fc}
\newcommand{\Pc}{\mathcal{P}}
\newcommand{\Pcb}{\overline \Pc}
\newcommand{\Pb}{\overline \PP}
\newcommand{\Uc}{\mathcal{U}}
\newcommand{\Rc}{\mathcal{R}}
\newcommand{\eps}{\varepsilon}

\newcommand{\red}{\textcolor{red}}
\newcommand{\blue}{\textcolor{blue}}
\title{ \Large{\textbf{On the Root solution to the Skorokhod embedding problem \\ given full marginals }}}

\author{Alexandre Richard \footnote{Université Paris-Saclay, CentraleSupélec, MICS and CNRS FR-3487, alexandre.richard@centralesupelec.fr. This author is grateful to the CMAP for its hospitality, where most of this work was carried out.}
	\and Xiaolu Tan \footnote{Department of Mathematics, The Chinese University of Hong Kong. This author acknowledges the financial support of the Initiative de Recherche ``M\'ethodes non-lin\'eaires pour la gestion des risques financiers'' sponsored by AXA Research Fund,  xiaolu.tan@cuhk.edu.hk.
	This author acknowledges the financial support of the Initiative de Recherche ``M\'ethodes non-lin\'eaires pour la gestion des risques financiers'' sponsored by AXA Research Fund.}
	\and Nizar Touzi \footnote{CMAP, Ecole Polytechnique, nizar.touzi@polytechnique.edu. This author acknowledges the financial support of the Chaires Financial Risks, and Finance and Sustainable Development, hosted by the Louis Bachelier Institute.}
}

\date{\today}

\renewcommand{\thefootnote}{\fnsymbol{footnote}} 
\footnotetext{\emph{MSC2010 Subject Classification.} 60G40, 91G80.}     
\footnotetext{\emph{Key words.} Skorokhod embedding problem, Root's solution, Optimal stopping problem.} 
\footnotetext{This work was started while the authors had the opportunity to benefit from the ERC grant 311111 RoFiRM 2012-2018.}    
\renewcommand{\thefootnote}{\arabic{footnote}} 

\maketitle

\begin{abstract}
 This paper examines the Root solution of the Skorokhod embedding problem given full marginals on some compact time interval. Our results are obtained by limiting arguments based on finitely-many marginals Root solution of Cox, Ob{\l}{\'o}j, and Touzi \cite{COT}. Our main result provides a characterization of the corresponding potential function by means of a convenient parabolic PDE.
\end{abstract}

\section{Introduction}

	The Skorokhod embedding problem, initially suggested by Skorokhod \cite{Skorokhod}, 
	consists in finding a stopping time $\tau$ on a given Brownian motion $B$ such that $B_{\tau} \sim \mu$ for a given marginal distribution $\mu$ on $\R$. The existing literature contains various solutions suggested in different contexts. Some of them satisfy an optimality property among all possible solutions, e.g. the Root solution \cite{Root}, the Rost solution \cite{Rost}, the Az\'ema-Yor solution \cite{AzemaYor}, the Vallois solution \cite{Vallois}, the Perkins solution \cite{Perkins}, etc.
	This problem has been extensively revived in the recent literature due to the important connexion with the problem of robust hedging in financial mathematics. We refer to Ob{\l}{\'o}j \cite{Obloj} and Hobson \cite{Hobson} for a survey on different solutions and the applications in finance.

	\vspace{0.5em}

	Our interest in this paper is on the Root solution of the Skorokhod embedding problem, which is characterized as a hitting time of the Brownian motion $B$ of  some time-space domain $\Rc$ unlimited to the right, that is, $\tau_{R} := \inf\{ t \ge 0~: (t, B_t) \in \Rc \}$.
	This solution was shown by Rost \cite{Rost} to have the minimal variance among all solutions to the embedding problem.
	As an application in finance, it can be used to deduce robust no-arbitrage price bounds for a class of variance options (see e.g. Hobson \cite{Hobson}).
	To find the barrier $\Rc$ in the description of the Root solution, Cox and Wang \cite{CoxWang} provided a construction by solving a variational inequality.
	This approach is then explored in Gassiat, Oberhauser and dos Reis \cite{GOR} and Gassiat, Mijatovi{\'c} and Oberhauser \cite{GMO} to construct $\Rc$ under more general conditions. We also refer to the remarkable work of Beiglb{\"o}ck, Cox and Huesmann \cite{BCH} which derives the Root embedding, among other solutions, as a natural consequence of the monotonicity principle in optimal transport.

	\vspace{0.5em}

	It is also natural to extend the Skorokhod embedding problem to the multiple-marginals case. Let $(\mu_k)_{0 \le k \le n}$ be a family of marginal distributions, nondecreasing in the convex order, i.e. $\mu_{k-1}(\phi)\le \mu_k(\phi)$, $k=1,\ldots,n$, for all convex functions $\phi: \R \to \R$. The multiple-marginals Skorokhod embedding problem is to find  a Brownian motion $B$, together with an increasing sequence of stopping times $(\tau_k)_{1 \le k \le n}$, such that 
	$B_{\tau_k} \sim \mu_k$ for each $k=1, \cdots, n$.
	Madan and Yor \cite{MadanYor} provided a sufficient condition on the marginals, under which the Az\'ema-Yor embedding stopping times corresponding to each marginal are automatically ordered,
	so that the iteration of Az\'ema-Yor solutions provides a solution to the multiple-marginals Skorokhod embedding problem.
	In general, the Az\'ema-Yor embedding stopping times may not be ordered. An extension of the Az\'ema-Yor embedding was obtained by	Brown, Hobson and Rogers \cite{BHR} in the two-marginals case, and later by Ob{\l}{\'o}j and Spoida \cite{OS} for an arbitrary finite number of marginals. Moreover, the corresponding embeddings enjoys the similar optimality property as in the one-marginal case.
	In Claisse, Guo and Henry-Labord{\`e}re \cite{CGHL}, an extension of the Vallois solution to the two-marginals case is obtained for a specific class of marginals. We also refer to Beiglb{\"o}ck, Cox and Huesmann \cite{BCH_n} for a geometric representation of the optimal Skorokhod embedding solutions given multiple marginals.
	
	The Root solution of the Skorokhod embedding problem was recently extended by Cox, Ob{\l}{\'o}j and Touzi \cite{COT} to the multiple-marginals case. Our objective in this paper is to characterize the limit case with a family of full marginals $\mu = (\mu_t)_{t \in [0,1]}$.
	Let us assume that each $\mu_t$ has finite first moment and $t \mapsto \mu_t$ is increasing in convex order. 
	Such a family was called a peacock (or PCOC ``Processus Croissant pour l'Ordre Convexe'' in French) by Hirsch, Profeta, Roynette and Yor \cite{HPRY}.
	Then Kellerer's Theorem \cite{Kellerer} ensures the existence of a martingale $M = (M_t)_{0 \le t \le 1}$ such that $M_t \sim \mu_t$ for each $t \in [0,1]$. If in addition $t \mapsto \mu_t$ is right-continuous, then so is the martingale $M$ up to a modification.
	Further, by Monroe's result \cite{Monroe}, one can find an increasing sequence of stopping times $(\tau_t)_{0 \le t \le 1}$ together with a Brownian motion $B = (B_s)_{s \ge 0}$ such that $B_{\tau_t} \sim \mu_t$ for each $t \in [0,1]$.
	This consists in an embedding for the full marginals $\mu$.
	We refer to \cite{HPRY} for different explicit constructions of the martingales or embeddings fitting the peacock marginals.
	Among all martingales or $\mu$-embeddings, it is interesting to find solutions enjoying some optimality properties.
	In the context of Madan and Yor \cite{MadanYor}, the Az\'ema-Yor embedding $\tau^{AY}_t$ of the one-marginal problem with $\mu_t$ is ordered w.r.t. $t$, and thus $(\tau^{AY}_t)_{0 \le t \le 1}$ is the embedding maximizing the expected maximum among all embedding solutions. This optimality is further extended by K{\"a}llblad, Tan and Touzi\cite{KTT} allowing for non-ordered barriers.
	Hobson \cite{HobsonPeacock} gave a construction of a martingale with minimal expected total variation among all martingales fitting the marginals.
	Henry-Labord{\`e}re, Tan and Touzi\cite{HLTT_Brenier} provided a local L\'evy martingale, as limit of the left-monotone martingales introduced by Beiglb{\"o}ck and Juillet \cite{BJ} (see also Henry-Labord{\`e}re and Touzi \cite{HLT_Brenier}),
	which inherits its optimality property.
	For general existence of the optimal solution and the associated duality result, one needs a tightness argument,
	which is studied in Guo, Tan and Touzi \cite{GTT_S_topology} by using the S-topology on the Skorokhod space,
	and in K{\"a}llblad, Tan and Touzi \cite{KTT} by using the Skorokhod embedding approach.

	\vspace{0.5em}
	
	The aim of this paper is to study the full marginals limit of the multiple-marginals Root embedding as derived in \cite{COT}.
	This leads to a natural extension of the Root solution for the embedding problem given full marginals.
	Using the tightness result in \cite{KTT}, we can easily obtain the existence of such limit as well as its optimality.
	We then provide some characterization of the limit Root solution as well as that of the associated optimal stopping problem, which is used in the finitely many marginals case to describe the barriers.

	\vspace{0.5em}
	
	In the rest of the paper, 
	we will first formulate our main results in Section \ref{sec:MainResults}.
	Then in Section \ref{sec:recall}, we recall some details on the Root solution given finitely many marginals in \cite{COT} and the limit argument of \cite{KTT}, which induces the existence of the limit Root solution for the embedding problem given full marginals.
	We then provide the proofs of our main results on some characterization of the limit Root solution in Section \ref{sec:proofs}.
	Some further discussions are finally provided in Section \ref{sec:discussion}.

\section{Problem formulation and main results}
\label{sec:MainResults}
	We are given a family of probability measures 
	$\mu = (\mu_s)_{s\in[0,1]}$ on $\R$, such that $\mu_s$ is centred with finite first moment for all $s \in [0,1]$,
	$s \mapsto \mu_s$ is c\`adl\`ag under the weak convergence topology, and
	the family $\mu$ is non-decreasing in convex order, i.e. for any convex function $\phi:\R \to \R$,
	\begin{align*}
		\int_\R \phi(x) \mu_s(dx) \leq \int_\R \phi(x) \mu_t(dx)~
		~~\mbox{for all}~s \le t.
	\end{align*}

\begin{definition}\label{def:embedding}
	$\mathrm{(i)}$ A \emph{stopping rule} is a term 
	$$
		\alpha = (\Om^{\alpha}, \Fc^{\alpha}, \F^{\alpha}, \PP^{\alpha}, B^{\alpha},\mu_0^\alpha, (\tau^{\alpha}_s)_{s\in [0,1]}),
	$$
	such that $(\Om^{\alpha}, \Fc^{\alpha}, \F^{\alpha}, \PP^{\alpha})$ is a filtered probability space equipped with a Brownian motion $B^{\alpha}$ with initial law $\mu_0^\alpha$
	and a family of stopping times $(\tau^{\alpha}_s)_{s \in [0,1]}$ such that $s \mapsto \tau^{\alpha}_s$ is c\`adl\`ag, non-decreasing and $\tau_0^\alpha=0$.
	We denote
	$$
		\Ac := \big\{ \mbox{All stopping rules}\big\},
		~~\mbox{and}~~
		\Ac_t := \big\{ \alpha \in \Ac ~:  \tau^{\alpha}_1 \le t \big\},
		~~\mbox{for all}~t \ge 0.
	$$
	Denote also
	$$
		\Ac^0 ~:=~ \{ \alpha \in \Ac ~: \mu_0^\alpha = \delta_0 \}
		~~~\mbox{and}~~
		 \Ac^0_t ~:=~  \{ \alpha \in \Ac ~: \tau^{\alpha}_1 \le t,~~\mu_0^\alpha = \delta_0 \}.
	$$
%	\red{Later, choosing $\alpha \in \Ac$ (resp. $\alpha\in\Ac_t$) with no further instruction on $\mu_0^\alpha$ implicitly means that $\mu_0^\alpha = \delta_0$.}

	\noindent $\mathrm{(ii)}$ A stopping rule $\alpha \in \Ac$  is called a \emph{$\mu$-embedding} if
	 $\mu^{\alpha}_0 = \mu_0$,
	$(B^{\alpha}_{t \wedge \tau^{\alpha}_1})_{t \ge 0}$ is uniformly integrable
	and $B^{\alpha}_{\tau^{\alpha}_s} \sim \mu_{s}$ for all $s \in [0,1]$. 
	In particular, one has $\tau^{\alpha}_0 = 0$, a.s. 
	We denote by $\Ac(\mu)$ the collection of all $\mu$-embeddings. 

	\vspace{0.5em}

	\noindent $\mathrm{(iii)}$ Let $\pi_n= \{0 = s_0 < s_1 < \cdots < s_n =1\}$ be a partition of $[0,1]$. 
	A stopping rule $\alpha \in \Ac$  is called a \emph{$(\mu, \pi_n)$-embedding} if
	 $\mu^{\alpha}_0 = \mu_0$,
	$(B^{\alpha}_{t \wedge \tau^{\alpha}_1})_{t \ge 0}$ is uniformly integrable 
	and $B^{\alpha}_{\tau^{\alpha}_{s_k}} \sim \mu_{s_k}$ for all $k = 0, \dots, n$.
	We denote by $\Ac(\mu, \pi_n)$ the collection of all $(\mu, \pi_n)$-embeddings.

\end{definition}

	Our aim is to study the Root solution of the Skorokhod embedding problem (SEP, hereafter) given full marginals $(\mu_s)_{s\in[0,1]}$.
	To this end, we first recall the Root solution of the SEP given finitely many marginals, constructed in \cite{COT}.
	Let $(\pi_n)_{n \ge 1}$ be a sequence of partitions of $[0,1]$, where  $\pi_n = \{0 = s^n_0 < s^n_1 < \cdots < s^n_n =1\}$ and $|\pi_n| := \max_{k=1}^n |s^n_k - s^n_{k-1}| \to 0$ as $ n \to \infty$.
	Then for every fixed $n$, one obtains $n$ marginal distributions $(\mu_{s^n_k})_{1 \le k \le n}$ and has the following Root solution to the corresponding SEP.
	
	\begin{theorem*}[Cox, Ob\l{\'o}j and Touzi, 2018]
		For any $n \ge 1$, there exists a $(\mu, \pi_n)$-embedding $\alpha^*_n$ called Root embedding,
		where $\sigma^n_k := \tau^{\alpha^*_n}_{s^n_k}$ is defined by
		\begin{eqnarray*}
		\sigma^n_0 := 0
		&\mbox{ and }&
		\sigma^n_k := \inf \{ t \ge \sigma^n_{k-1} ~: (t, B^{\alpha^*_n}_t) \in \Rc^n_k \},
		\end{eqnarray*}
		for some family of barriers $(\Rc^n_k)_{1 \le k \le n}$ in $\R_+ \x \R$.
		Moreover, for any non-decreasing and non-negative function $f: \R_+ \to \R_+$, one has
		$$
			\EE^{\PP^{\alpha^*_n}} \Big[ \int_0^{\tau^{\alpha^*_n}_1} f(t) dt \Big]
			~=~
			\inf_{\alpha \in \Ac(\mu, \pi_n)} \EE^{\PP^\alpha} \Big[  \int_0^{\tau^\alpha_1} f(t) dt \Big].
		$$
	\end{theorem*}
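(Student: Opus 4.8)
\medskip

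\noindent\emph{Overview.} Fix $n\ge 1$ and abbreviate $\nu_k:=\mu_{s^n_k}$, so that $(\nu_k)_{0\le k\le n}$ is centred, has finite first moments and is non-decreasing in the convex order. Writing $F(t):=\int_0^t f(r)\,\dd r$, which is convex, non-decreasing and vanishes at the origin (and every such $F$ arises in this way), the asserted identity is equivalent to $\EE^{\PP^{\alpha^*_n}}[F(\sigma^n_n)]=\inf_{\alpha\in\Ac(\mu,\pi_n)}\EE^{\PP^\alpha}[F(\tau^\alpha_1)]$ for every convex non-decreasing $F$ with $F(0)=0$. The plan is to (a) construct $\alpha^*_n$ together with the barriers $(\Rc^n_k)_k$ by iterating the one-marginal Root embedding and its Cox--Wang PDE characterisation, phrased through potential functions, and (b) prove the optimality by a Lagrangian-duality/verification argument whose dual optimizer is built from those very potential functions. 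As a preliminary sanity check, Tanaka's formula together with the occupation-times formula gives $\EE^{\PP^\alpha}[\tau^\alpha_1]=\int_\R(U_{\mu_0}-U_{\mu_1})(x)\,\dd x$ for every $\alpha\in\Ac(\mu,\pi_n)$, where $U_\nu(x):=-\int_\R|x-y|\,\nu(\dd y)$; hence the affine part of $F$ is immaterial, and all the content lies in the convexity of $F$, i.e.\ in the monotonicity of $f$.

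\medskip

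\noindent\emph{Construction.} For $k=0,\dots,n$ set $u_k(t,x):=-\,\EE^{\PP^{\alpha^*_n}}[\,|B_{t\wedge\sigma^n_k}-x|\,]$, with $u_0(t,x):=U_{\nu_0}(x)$ constant in $t$. From $\sigma^n_{k-1}\le\sigma^n_k$ and the submartingale property of $|B_{\cdot}-x|$ one reads off that $U_{\nu_k}\le u_k\le u_{k-1}$, that $t\mapsto u_k(t,x)$ is non-increasing with $u_k(0,\cdot)=U_{\nu_0}$ and $u_k(t,\cdot)\downarrow U_{\nu_k}$ as $t\to\infty$, and that $\partial_t u_k-\tfrac12\partial_{xx}u_k\ge 0$ in the distributional sense --- this last quantity being the density of the mass absorbed by time $t$, hence vanishing exactly on the continuation region $\{u_k>U_{\nu_k}\}$. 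I would characterise the $u_k$ inductively as the \emph{smallest} function dominating the obstacle $U_{\nu_k}$, dominated by $u_{k-1}$, satisfying $\partial_t u_k\ge\tfrac12\partial_{xx}u_k$, and with initial datum $U_{\nu_0}$; equivalently, on the open set $\{u_k<u_{k-1}\}$ the function $u_k$ solves the parabolic obstacle problem $\min\{\partial_t u_k-\tfrac12\partial_{xx}u_k,\ u_k-U_{\nu_k}\}=0$ and is matched continuously to $u_{k-1}$ across the boundary of that set. Existence and uniqueness of the $u_k$ follow from standard monotone-iteration/penalisation arguments for parabolic variational inequalities (or, probabilistically, from a Loynes-type minimality argument as in Root's and Rost's work). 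Setting $\Rc^n_k:=\{u_k=U_{\nu_k}\}$ and $\sigma^n_k:=\inf\{t\ge\sigma^n_{k-1}:(t,B_t)\in\Rc^n_k\}$, the monotonicity of $u_k$ in $t$ makes $\Rc^n_k$ right-closed in the time variable, i.e.\ a genuine barrier; identifying $-u_k(t,\cdot)$ with the potential function of $\mathrm{Law}(B_{t\wedge\sigma^n_k})$ and letting $t\to\infty$ yields $\sigma^n_k<\infty$ a.s.\ and $B_{\sigma^n_k}\sim\nu_k$, while uniform integrability of $(B_{t\wedge\sigma^n_n})_{t\ge0}$ follows from $\EE[\sigma^n_n]<\infty$ and the finiteness of the first moments. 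This produces $\alpha^*_n\in\Ac(\mu,\pi_n)$ with the stated barrier representation.

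\medskip

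\noindent\emph{Optimality.} Relax the $n$ marginal constraints by Lagrange multipliers: for measurable $\lambda_1,\dots,\lambda_n:\R\to\R$ of at most linear growth and any $\alpha\in\Ac(\mu,\pi_n)$, $\EE^{\PP^\alpha}[F(\tau^\alpha_1)]=\sum_{k=1}^n\nu_k(\lambda_k)+\EE^{\PP^\alpha}[\,F(\tau^\alpha_1)-\sum_{k=1}^n\lambda_k(B_{\tau^\alpha_{s_k}})\,]$, whence
\[
	\inf_{\alpha\in\Ac(\mu,\pi_n)}\EE^{\PP^\alpha}\big[F(\tau^\alpha_1)\big]\ \ge\ \sum_{k=1}^n\nu_k(\lambda_k)\ +\ \Lambda(\lambda_1,\dots,\lambda_n),
\]
where $\Lambda(\lambda):=\inf\{\EE[\,F(\tau_n)-\sum_{k=1}^n\lambda_k(B_{\tau_k})\,]:\ 0=\tau_0\le\tau_1\le\cdots\le\tau_n\}$ is the value of an unconstrained iterated optimal stopping problem over an admissible class of stopping rules containing all $\mu$-embeddings. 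I would solve this inner problem by backward dynamic programming, obtaining value functions $h_n,\dots,h_1$ where $h_k$ solves a parabolic obstacle problem with running reward $f$ and obstacle $h_{k+1}-\lambda_k$ (with $h_{n+1}:=0$), so that $\Lambda(\lambda)=\mu_0(h_1(0,\cdot))$ and the optimal stopping times are the hitting times of the contact sets $\{h_k=h_{k+1}-\lambda_k\}$. The crux is then to choose the multipliers $\lambda_k$, read off from the potential functions $u_k$ of the construction, so that these contact sets coincide with the barriers $\Rc^n_k$ --- this is precisely where the primal (obstacle) and dual (optimal-stopping) sides meet. For such a choice, $\alpha^*_n$ attains the inner infimum, so $\Lambda(\lambda)=\EE^{\PP^{\alpha^*_n}}[F(\sigma^n_n)-\sum_k\lambda_k(B_{\sigma^n_k})]$; and since $\alpha^*_n$ is a genuine embedding, $\EE^{\PP^{\alpha^*_n}}[\lambda_k(B_{\sigma^n_k})]=\nu_k(\lambda_k)$, so the right-hand side of the displayed inequality equals $\EE^{\PP^{\alpha^*_n}}[F(\sigma^n_n)]$. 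Since $\alpha^*_n\in\Ac(\mu,\pi_n)$ also bounds the infimum from above, both inequalities are equalities and the claim follows.

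\medskip

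\noindent\emph{Main difficulty.} The duality bound (weak duality) and the backward dynamic programming for the inner problem are soft, standard steps. The genuinely hard part is the construction together with its coupling to the dual: establishing well-posedness of the system of parabolic variational inequalities, showing that their contact sets are true barriers embedding the prescribed marginals, and --- most importantly --- verifying that the same potential functions furnish, through a Legendre-type transform, dual certificates $\lambda_k$ for which the iterated optimal stopping problem is solved by the Root times with no duality gap. One must also control the regularity of the $u_k$ and $h_k$ near the free boundaries to justify the It\^o/verification computations, and keep all quantities integrable under only the finite-first-moment hypothesis (uniform integrability, $\EE[\sigma^n_n]<\infty$, admissibility of the multipliers).
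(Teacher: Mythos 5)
You should first note that this theorem is not proved in the present paper at all: it is quoted from \cite{COT}, and what the paper supplies (Section \ref{subsec:Root_n}) is the construction it relies on, namely the iterated optimal stopping problems \eqref{eq:OS_discrTime}, whose obstacle at step $j$ is $u^n(s^n_{j-1},\cdot,\cdot)+\delta^n U(s^n_j,\cdot)$, and the barriers \eqref{eq:defRegion} defined as the contact sets of the \emph{increments}, $\{\delta^n u(s^n_j,t,x)=\delta^n U(s^n_j,x)\}$. Your construction replaces this by the contact set with the target potential itself: you set $u_k(t,x)=-\EE|B_{t\wedge\sigma^n_k}-x|$ and $\Rc^n_k:=\{u_k=U_{\nu_k}\}$ (with $\nu_k=\mu_{s^n_k}$), characterizing $u_k$ as the smallest heat supersolution above the obstacle $U_{\nu_k}$ and below $u_{k-1}$. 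This is correct for one marginal (Root/Cox--Wang), but it is wrong for $k\ge 2$, and this is a genuine gap, not a different route. By Tanaka's formula, $u_k(t,x)=U_{\nu_0}(x)-\EE L^x_{t\wedge\sigma^n_k}$ and $U_{\nu_k}(x)=U_{\nu_0}(x)-\EE L^x_{\sigma^n_k}$, so $(t,x)\in\{u_k=U_{\nu_k}\}$ means that \emph{no} expected local time at level $x$ is accumulated after time $t$ up to $\sigma^n_k$ --- including by paths which at time $t$ are still in an earlier leg $[\sigma^n_{i-1},\sigma^n_i)$, $i<k$, and are therefore not subject to $\Rc^n_k$. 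The correct barrier only demands no local time at $x$ after $t$ \emph{during the $k$-th leg}, which is exactly the increment condition $\delta^n u=\delta^n U$; your set is strictly smaller in general, and in a way that destroys the embedding.

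Concretely, take $\mu_0=\delta_0$, $\nu_1=\tfrac12(\delta_{-1}+\delta_1)$ and any $\nu_2$ dominating $\nu_1$ in convex order which charges $(-\tfrac12,\tfrac12)$, e.g. $\tfrac14\delta_{-2}+\tfrac14\delta_{2}+\tfrac12\,\mathrm{Unif}(-\tfrac12,\tfrac12)$. Since $\sigma^n_1$ (the hitting time of $\pm1$) is unbounded, for every level $|x|<1$ and every finite $t$ a set of paths of positive probability is still in the first leg after time $t$ and accrues local time at $x$ there; hence $u_2(t,x)>U_{\nu_2}(x)$ strictly for all finite $t$, so $\{u_2=U_{\nu_2}\}$ contains no finite-time point at levels in $(-1,1)$, and the hitting time of this set after $\sigma^n_1$ can never stop mass inside $(-1,1)$ --- it cannot embed $\nu_2$, so your scheme admits no consistent fixed point with the stated properties. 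Relatedly, your assertion that $\partial_t u_k-\tfrac12\partial^2_{xx}u_k$ vanishes exactly on $\{u_k>U_{\nu_k}\}$ is false: leg-$k$ mass is absorbed at points where $u_k>U_{\nu_k}$, so the ``smallest supersolution above $U_{\nu_k}$, below $u_{k-1}$'' does not single out the right function. The repair is precisely the structure recalled in \eqref{eq:OS_discrTime}--\eqref{eq:defRegion}: the obstacle at step $k$ must be $u_{k-1}+\delta^n U(s^n_k,\cdot)$, i.e.\ one works with increments (this is the content of the optimal stopping and time-reversal approach of \cite{COT}). Your optimality argument is also affected: the Lagrangian/weak-duality outline is fine in spirit, but the step you yourself flag as the crux --- choosing multipliers so that the dual stopping regions coincide with the barriers --- presupposes the correct (increment) identification of those barriers, so as written the verification cannot close.
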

	
	The barriers $(\Rc^n_k)_{1 \le k \le n}$ are given explicitly in \cite{COT} by solving an optimal stopping problem, see Section \ref{subsec:Root_n} below.

	\vspace{0.5em}

	Let us denote by $\A([0,1], \R_+)$ the space of all c\`adl\`ag non-decreasing functions $a: [0,1] \to \R_+$,
	which is a Polish space under the L\'evy metric.
	Notice also that the L\'evy metric metrizes the weak convergence topology on $\A([0,1], \R_+)$ seen as a space of finite measures.
	 See also (1.2) in \cite{KTT} for a precise definition of the L\'evy metric on $\A([0,1], \R_+)$.
	Denote also by $C(\R_+, \R)$ the space of all continuous paths $\om: \R_+ \to \R$ with $\om_0 =0$,
	which is a Polish space under the compact convergence topology.
	Then for a given embedding $\alpha$, one can see $(B^{\alpha}_{\cdot},\tau^{\alpha}_{\cdot})$ as a random element taking values in $ C(\R_+, \R) \x \A([0,1], \R_+)$, which allows to define their weak convergence.
	Our first main result ensures that the $(\mu, \pi_n)$-Root embedding has a limit in the sense of the weak convergence,
	which enjoys the same optimality property, and thus can be considered as the full marginals Root solution of the SEP. Our proof requires the following technical condition.
	
	\vspace{0.5em}

	Let  us denote by $U: [0,1] \x \R \to \R$ the potential function of $\mu$:
		\begin{equation} \label{eq:potiential_function}
			U(s,x) := -\int_\R |x-y| \mu_s(dy).
		\end{equation}
	\begin{assumption} \label{assum:U}
		The partial derivative $\partial_s U$ exists and is continuous,
		and $x \mapsto \sup_{s\in[0,1]}\partial_s U(s,\cdot)$ has at most polynomial growth.
	\end{assumption}

	\begin{remark}
		When $\mu_s$ has a density function $y \mapsto f(s, y)$ for every $s \in [0,1]$, 
		and the derivative $\partial_sf(s,y)$ exists and satisfies 
		$$
			C := \int_{\R} (|y| \vee 1) \sup_{0 \le s \le 1} |\partial_s f(s,y)| dy < \infty.
		$$
		Then it is easy to deduce that $\partial_s U(s,x)$ exists and satisfies $|\partial_s U(s,x)| \le C(1 + |x|)$.
		We also refer to Section \ref{subsec:Ass_U} for further discussion and examples on Assumption \ref{eq:potiential_function}.
	\end{remark}

	\begin{theorem} \label{thm:KTTintro}
		$\mathrm{(i)}$ Let $(\pi_n)_{n \ge 1}$ be a sequence of partitions of $[0,1]$ such that $|\pi_n| \to 0$ as $n \to \infty$.
		Denote by $\alpha^*_n$ the corresponding $(\mu, \pi_n)$-Root embedding solution.
		Then there exists $\alpha^* \in \Ac(\mu)$ such that the sequence $(B^{\alpha^*_n}_{\cdot},\tau^{\alpha^*_n}_{\cdot})_{n \ge 1}$ converges weakly to $(B^{\alpha^*}_{\cdot},\tau^{\alpha^*}_{\cdot})$.
		Moreover, for all non-decreasing and non-negative functions $f: \R_+ \to \R_+$, one has
		$$
			\EE^{\PP^{\alpha^*}} \Big[ \int_0^{\tau^{\alpha^*}_1} f(t) dt \Big]
			~=~
			\inf_{\alpha \in \Ac(\mu)} \EE^{\PP^\alpha} \Big[  \int_0^{\tau^\alpha_1} f(t) dt \Big].
		$$
		
		\noindent $\mathrm{(ii)}$ Under Assumption \ref{assum:U}, for all fixed $(s,t)\in[0,1]\times\R_+$, the law of $B^{\alpha^*}_{\tau^{\alpha^*}_s\wedge t}$ is independent of the sequence of partitions $(\pi_n)_{n \ge 1}$ and of the limit $\alpha^*$.
	\end{theorem}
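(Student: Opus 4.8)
The plan for part~(i) is to combine the existence result of \cite{KTT} with the optimality of the pre-limit embeddings. First I would observe that, by optional stopping applied to $B^{\alpha^*_n}$ and to $(B^{\alpha^*_n})^2-\langle B^{\alpha^*_n}\rangle$, one has $\EE^{\PP^{\alpha^*_n}}[\tau^{\alpha^*_n}_1]=\int_\R y^2\,\mu_1(dy)$, finite and independent of $n$; since moreover every $B^{\alpha^*_n}$ is a Brownian motion, the sequence $(B^{\alpha^*_n}_\cdot,\tau^{\alpha^*_n}_\cdot)_{n\ge1}$ is tight in $C(\R_+,\R)\times\A([0,1],\R_+)$. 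Passing to a weakly convergent subsequence, realised on one probability space by the Skorokhod representation theorem, I would check that the limit $(B^{\alpha^*}_\cdot,\tau^{\alpha^*}_\cdot)$ defines a stopping rule with $B^{\alpha^*}$ a Brownian motion, and that $\alpha^*\in\Ac(\mu)$, i.e.\ $B^{\alpha^*}_{\tau^{\alpha^*}_s}\sim\mu_s$ for every $s$ with $(B^{\alpha^*}_{t\wedge\tau^{\alpha^*}_1})_t$ uniformly integrable; with $\kappa_n(s):=\max\{k:s^n_k\le s\}$ (so $s^n_{\kappa_n(s)}\to s$), this is exactly the limiting argument of \cite{KTT} — transferring the constraints $B^{\alpha^*_n}_{\tau^{\alpha^*_n}_{s^n_{\kappa_n(s)}}}\sim\mu_{s^n_{\kappa_n(s)}}$ through the a.s.\ convergence, the c\`adl\`ag dependence $s\mapsto\mu_s$, the continuity of the evaluation maps at pairs where $\tau^{\alpha^*}$ has no jump at $s$, and the uniform second moment bound — which I would simply invoke. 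For the optimality, since $\Ac(\mu)\subseteq\Ac(\mu,\pi_n)$ for all $n$, the theorem of \cite{COT} gives, for every non-decreasing $f\ge0$,
\begin{align*}
\EE^{\PP^{\alpha^*_n}}\Big[\int_0^{\tau^{\alpha^*_n}_1}f(t)\,dt\Big]
&=\inf_{\alpha\in\Ac(\mu,\pi_n)}\EE^{\PP^\alpha}\Big[\int_0^{\tau^\alpha_1}f(t)\,dt\Big]\\
&\le\inf_{\alpha\in\Ac(\mu)}\EE^{\PP^\alpha}\Big[\int_0^{\tau^\alpha_1}f(t)\,dt\Big].
\end{align*}
Along the Skorokhod representation $\tau^{\alpha^*_n}_1\to\tau^{\alpha^*}_1$ a.s.\ (total masses of weakly converging finite measures on the compact interval $[0,1]$ converge), hence $\int_0^{\tau^{\alpha^*_n}_1}f\to\int_0^{\tau^{\alpha^*}_1}f$ a.s.\ by continuity of $c\mapsto\int_0^cf$, and Fatou's lemma yields $\EE^{\PP^{\alpha^*}}[\int_0^{\tau^{\alpha^*}_1}f]\le\liminf_n\EE^{\PP^{\alpha^*_n}}[\int_0^{\tau^{\alpha^*_n}_1}f]\le\inf_{\alpha\in\Ac(\mu)}\EE^{\PP^\alpha}[\int_0^{\tau^\alpha_1}f]$. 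Since $\alpha^*\in\Ac(\mu)$ the reverse inequality is trivial, so equality holds, proving~(i).

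For part~(ii), fix $(s,t)$. As $B^{\alpha^*}_{\tau^{\alpha^*}_s\wedge t}$ has finite first moment ($\EE|B^{\alpha^*}_{\tau^{\alpha^*}_s\wedge t}|\le\int_\R|y|\,\mu_1(dy)$ by optional stopping), its law $\nu^{s,t}$ is completely determined by its potential function $x\mapsto U^*(s,t,x):=-\EE^{\PP^{\alpha^*}}|B^{\alpha^*}_{\tau^{\alpha^*}_s\wedge t}-x|$, since $-\tfrac12\partial_{xx}U^*(s,t,\cdot)=\nu^{s,t}$ in the sense of distributions and the linear asymptotics of $U^*$ at $\pm\infty$ pin down the remaining degrees of freedom. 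It therefore suffices to show $U^*$ is a functional of $\mu$ alone, independent of $(\pi_n)_{n\ge1}$ and of the subsequential limit $\alpha^*$. With the c\`adl\`ag convention $\tau^{\alpha^*_n}_s=\sigma^n_{\kappa_n(s)}$, and using part~(i), the Skorokhod representation, and the uniform integrability of $\{B^{\alpha^*_n}_{\tau^{\alpha^*_n}_s\wedge t}\}_n$ (all $B^{\alpha^*_n}_{\tau^{\alpha^*_n}_1}$ have law $\mu_1$, and the stopped martingales inherit the uniform integrability), which is needed to carry the unbounded integrand $|\cdot-x|$ through the limit, I would identify, for every continuity point $s$ of $s\mapsto\mu_s$ and every $(t,x)$,
\begin{align*}
U^*(s,t,x)=\lim_{n\to\infty}u^n_{\kappa_n(s)}(t,x),\qquad\text{where }\ u^n_k(t,x):=-\EE^{\PP^{\alpha^*_n}}\big|B^{\alpha^*_n}_{\sigma^n_k\wedge t}-x\big|,
\end{align*}
the remaining (countably many) values of $s$ being handled by right-continuity.

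It then remains to characterise this limit. By \cite{COT} each $u^n_k$ is characterised (via an auxiliary optimal stopping problem, equivalently a parabolic variational inequality on $\R_+\times\R$) in terms of the potentials $U(s^n_0,\cdot),\dots,U(s^n_k,\cdot)$; in particular, an elementary Tanaka computation already shows that $U^n(s,t,x):=u^n_{\kappa_n(s)}(t,x)$ satisfies, for each fixed $s$, $(\partial_t-\tfrac12\partial_{xx})U^n(s,\cdot,\cdot)\ge0$ and the obstacle bound $U^n(s,\cdot,\cdot)\ge U(s^n_{\kappa_n(s)},\cdot)$ in the distributional sense, with $U^n(s,0,\cdot)=U(0,\cdot)$ and $\lim_{t\to\infty}U^n(s,t,\cdot)=U(s^n_{\kappa_n(s)},\cdot)$, together with a discrete ``$s$-dynamics'' in which the increments $U(s^n_k,\cdot)-U(s^n_{k-1},\cdot)\approx(s^n_k-s^n_{k-1})\,\partial_sU(s^n_{k-1},\cdot)$ drive the passage from $u^n_{k-1}$ to $u^n_k$. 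This is where Assumption~\ref{assum:U} enters: continuity of $\partial_sU$ turns the discrete $s$-dynamics into a genuine differential relation in the limit, and the polynomial growth bound on $x\mapsto\sup_s\partial_sU(s,x)$ supplies a priori estimates — uniform in $n$, locally uniform in $(s,t,x)$ — with which one extracts the limit and controls the tails, concluding that $U^*$ solves a limiting parabolic variational inequality: $(\partial_t-\tfrac12\partial_{xx})U^*\ge0$, $U^*\ge U$, $U^*(s,0,\cdot)=U(0,\cdot)$, $\lim_{t\to\infty}U^*(s,t,\cdot)=U(s,\cdot)$, the family $\{U^*(s,\cdot,\cdot)\}_s$ being coupled in $s$ through $\partial_sU$ (the precise form of this PDE is the object of Section~\ref{sec:proofs}). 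Finally I would prove a comparison principle showing that, in the relevant growth class, this limiting problem has at most one solution; uniqueness then forces $U^*$ to depend on $\mu$ only, whence $\nu^{s,t}=-\tfrac12\partial_{xx}U^*(s,t,\cdot)$, the law of $B^{\alpha^*}_{\tau^{\alpha^*}_s\wedge t}$, is independent of $(\pi_n)$ and of $\alpha^*$, as claimed. I expect the main obstacle to be precisely this last stretch — pinning down the correct limiting PDE, in particular the way the $\partial_sU$ term couples the obstacle problems across $s$, and proving its well-posedness with only the polynomial control afforded by Assumption~\ref{assum:U}; because the free boundaries $\Rc^n_k$ degenerate as $n\to\infty$, the whole argument has to be run at the level of the potential functions, where the problem linearises into a heat inequality against the obstacle $U$.
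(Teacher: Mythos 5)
Your treatment of part (i) is sound and essentially the paper's route: tightness and the transfer of the marginal constraints to the limit are exactly what is imported from \cite{KTT} (Lemma 4.5 there), and your optimality argument via $\Ac(\mu)\subseteq\Ac(\mu,\pi_n)$, the $n$-marginal optimality of \cite{COT}, Skorokhod representation and Fatou is a correct, slightly more self-contained substitute for the paper's appeal to Theorem 2.3 and Proposition 3.6 of \cite{KTT} (note that convergence of $\tau^{\alpha^*_n}_1$ does follow, since the L\'evy metric metrizes weak convergence of finite measures on the compact $[0,1]$, so total masses converge).

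For part (ii), however, there is a genuine gap exactly where you yourself locate the ``main obstacle''. Reducing to potential functions and identifying $U^*(s,t,x)=\lim_n u^n(s,t,x)$ via weak convergence and uniform integrability reproduces the paper's Proposition \ref{prop:equiv_u}; but this limit is, a priori, attached to the particular sequence $(\pi_n)$ and the particular subsequential limit $\alpha^*$, so everything hinges on characterizing it intrinsically in terms of $\mu$ alone. The conditions you propose to pass to the limit --- $(\partial_t-\tfrac12\partial_{xx})U^*\ge 0$, $U^*\ge U$, the condition at $t=0$ and a condition as $t\to\infty$, plus an unspecified ``$s$-coupling through $\partial_s U$'' --- do not determine $U^*$: what is missing is precisely the complementarity structure of \eqref{eq:PDE}, namely $\min\{\partial_t u-\tfrac12\partial^2_{xx}u,\ \partial_s(u-U)\}=0$ on $\mathrm{int}^p(\mathbf{Z})$ together with the boundary condition on the whole parabolic boundary $\{s\wedge t=0\}$ (including $u|_{s=0}=U(0,\cdot)$; no condition at $t=\infty$ is used). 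Without the exact equation one cannot even formulate, let alone prove, the comparison principle that your argument needs, and a comparison principle for the inequality system you list is false (many supersolution-type functions satisfy it). The paper closes this gap by a different mechanism: it first identifies the limit of $u^n$ with the intrinsic value \eqref{eq:def_u_intro} of an optimal stopping problem over stopping rules, using the multiple optimal stopping representation \eqref{eq:OS_discrTime_iterated} of $u^n$ from \cite{COT} and a two-sided discretization argument in which Assumption \ref{assum:U} provides the uniform integrability (Proposition \ref{prop:conv_un}); this already makes the limit partition- and $\alpha^*$-independent. It then derives \eqref{eq:PDE} via a time-change/control reformulation and dynamic programming, and proves the comparison principle (Proposition \ref{prop:comparison}) by the doubling-of-variables and Crandall--Ishii machinery, with the strictness perturbation $U(0,x)+\eta(1+s+t)$, in the linear-growth class justified by \eqref{eq:linear_growth}. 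Your proposal gestures at this endgame but does not supply the equation, the derivation, or the uniqueness argument, so the independence claim of part (ii) is not actually established.
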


	We next provide some characterization of the full marginals Root solution of the SEP $\alpha^*$ given in Theorem \ref{thm:KTTintro}. Let 
	\begin{eqnarray} \label{eq:def_u}
		u(s,t,x) := -\EE^{\PP^{\alpha^*}} \big[ |B^{\alpha^*}_{t\wedge \tau^{\alpha^*}_s} - x| \big], 
		& \quad (s,t,x)\in\mathbf{Z}:=[0,1] \times\R_+\times \R.&
	\end{eqnarray}
	Our next main result, Theorem \ref{th:OSP} below, provides a unique characterization of $u$ which is independent of the nature of the limit $\alpha^*$, thus justifying Claim (ii) of Theorem \ref{thm:KTTintro}. Moreover, it follows by direct computation that one has
	\begin{equation} \label{eq:linear_growth}
		 -|x|-\sqrt{t}\,\EE |\mathbf{N}(0,1)|
		 \;\le\; U_{\mathbf{N}(0,t)}(x)
		 \;\le\; u(1,t,x)
		 \le\; u(s,t,x)
		 \le\; U(0,x),
	\end{equation}
	for all $(s,t,x)\in [0,1] \x \R_+ \x \R$,
	where we denoted by $U_{\mathbf{N}(0,t)}$ the potential function of the $\mathbf{N}(0,t)$ distribution
	(see \eqref{eq:potiential_function} for the definition of the potential function).
	
	\vspace{0.5em}
	
	In the finitely many marginals case in \cite{COT}, the function $u$ is obtained from an optimal stopping problem and is then used to define the barriers in the construction of the Root solution.
	Similar to equations (2.10) and (3.1) in \cite{COT}, we can characterize $u$ as value function of an optimal stopping problem,
	and then as unique viscosity solution of the variational inequality:
	\begin{equation}\label{eq:PDE}
	\begin{cases}
		\min 
		\big\{
			\partial_t u - \frac{1}{2} \partial^2_{xx} u, 
			~\partial_s(u-U) 
		\big\}
		~=~ 0, &\mbox{on}~~\mbox{\rm int}^p(\mathbf{Z}),\\
		u\big|_{t=0} = u\big|_{s=0} = U(0,.), &
	\end{cases}
	\end{equation}
	where we introduce the parabolic boundary and interior of $\mathbf{Z}$:
	$$
		\partial^p \mathbf{Z} := \{(s,t,x) \in \mathbf{Z} ~:  s \wedge t = 0\}
		~~~\mbox{and}~~
		\mbox{\rm int}^p(\mathbf{Z}) := \mathbf{Z} \setminus \partial^p \mathbf{Z}.
	$$
	 In \eqref{eq:PDE}, the boundary condition $u \big|_{t=0}=U(0,\cdot)$ means that $u(s,0,x)=U(0,x)$ for all $(s,x) \in [0,1] \x \R$. 
	Let $Du$ and $D^2u$ denote the gradient and Hessian of $u$ w.r.t. $z = (s,t,x)$, and set:
	\begin{equation} \label{eq:def_F}
		F(Du, D^2u) := \min 
		\big\{
			\partial_t u - \frac{1}{2} \partial^2_{xx} u, 
			~\partial_s(u-U)
		\big\}.
	\end{equation}

	\begin{definition}
		$\mathrm{(i)}$ An upper semicontinuous function $v: \mathbf{Z} \to \R$ is a viscosity subsolution of \eqref{eq:PDE} if 
		$v|_{\partial^p \mathbf{Z}} \le U(0, \cdot)$ and $F(D\varphi, D^2 \varphi)(z_0) \le 0$ for all $(z_0, \varphi) \in \mbox{\rm int}^p(\mathbf{Z}) \x C^2(\mathbf{Z})$ satisfying $(v-\varphi)(z_0) = \max_{z \in \mathbf{Z}} (v - \varphi)(z)$.

		\vspace{0.5em}

		\noindent $\mathrm{(ii)}$ A  lower semicontinuous function $w: [0,1] \x \R_+ \x \R \to \R$ is a viscosity supersolution of \eqref{eq:PDE} if 
		$w|_{\partial^p \mathbf{Z}} \ge U(0, \cdot)$ and $F(D\varphi, D^2 \varphi)(z_0) \ge 0$ for all $(z_0, \varphi) \in \mbox{\rm int}^p(\mathbf{Z}) \x C^2(\mathbf{Z})$ satisfying $(w-\varphi)(z_0) = \min_{z \in \mathbf{Z}} (w - \varphi)(z)$.

		\vspace{0.5em}

		\noindent $\mathrm{(iii)}$ A continuous function $v$ is a viscosity solution of \eqref{eq:PDE} if it is both viscosity subsolution and supersolution.
	\end{definition}

\begin{theorem}\label{th:OSP}
	Let Assumption \ref{assum:U} hold true.
	
	\noindent $\mathrm{(i)}$ The function $u$ can be expressed as value function of an optimal stopping problem,
	\begin{equation}\label{eq:def_u0}
		u(s,t,x) 
		= 
		\sup_{\alpha \in \Ac_t^0} 
		\EE^{\PP^{\alpha}} \Big[ 
			U(0,x+B^{\alpha}_{\tau^{\alpha}_s}) + \int_0^s \partial_s U(s-k,x+B^{\alpha}_{\tau^{\alpha}_k}) \mathbf{1}_{\{\tau^{\alpha}_k<t\}} dk 
		\Big].
	\end{equation}

	\noindent $\mathrm{(ii)}$ The function $u(s,t,x)$ is non-increasing and Lipschitz in $s$ with a locally bounded Lipschitz constant $C(t,x)$, uniformly Lipschitz in $x$ and uniformly $\frac12$-H\"older in $t$.
	
	\vspace{0.5em}
	
	\noindent $\mathrm{(iii)}$  $u$ is the unique viscosity solution of \eqref{eq:PDE} in the class of functions satisfying
	\begin{eqnarray*}
		|u(s,t,x)| \le C(1+t+|x|), ~~(s,t,x)\in\mathbf{Z},
		&~\mbox{for some constant}~C>0.
	\end{eqnarray*}
\end{theorem}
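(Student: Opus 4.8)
The plan is to prove the three claims in order, using the finitely-many-marginals results of \cite{COT} together with the weak convergence established in Theorem \ref{thm:KTTintro}(i) as the bridge.

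\textbf{Step 1: the optimal stopping representation (i).} For each partition $\pi_n$, \cite{COT} expresses the analogous function $u_n$ (the potential of $B^{\alpha^*_n}_{t\wedge\tau^{\alpha^*_n}_{s}}$ along the grid) as the value of an optimal stopping problem of exactly the form in \eqref{eq:def_u0}, but with the integral replaced by a Riemann sum over the grid points $s^n_k$ and $\partial_s U$ replaced by the finite differences $\mu_{s^n_k}(|\cdot - x|) - \mu_{s^n_{k-1}}(|\cdot - x|)$. I would first recall this discrete representation verbatim. Then I pass to the limit: on the left side, $u_n(s^n,t,x) \to u(s,t,x)$ by the weak convergence of $(B^{\alpha^*_n}_\cdot,\tau^{\alpha^*_n}_\cdot)$ and the uniform integrability in Definition \ref{def:embedding}(ii) (the map $(\omega,a)\mapsto |\omega_{t\wedge a_s}-x|$ is continuous and uniformly integrable thanks to \eqref{eq:linear_growth}); on the right side, for the ``$\ge$'' inequality I take any $\alpha\in\Ac^0_t$, feed it into the discrete problem, and use Assumption \ref{assum:U} (continuity of $\partial_s U$ plus the polynomial bound on $\sup_s\partial_s U$, which dominates the Riemann sums uniformly in $n$ and $\alpha$) to pass the Riemann sum to the integral by dominated convergence. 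For ``$\le$'' I use the optimizers $\alpha^*_n$ themselves, extract the weakly convergent subsequence to $\alpha^*$, and identify the limit of the discrete value with $u(s,t,x)$ while the value of the limiting functional at $\alpha^*$ is at most the supremum. The polynomial-growth part of Assumption \ref{assum:U} is exactly what makes these limiting arguments uniform; this is the main technical obstacle.

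\textbf{Step 2: regularity (ii).} Monotonicity in $s$ is immediate from \eqref{eq:linear_growth} / the convex ordering of $(\mu_s)$, or directly from the representation \eqref{eq:def_u0} since $\partial_s U \le 0$ and $\Ac^0_t$ is independent of $s$. The Lipschitz constant in $s$ comes from differentiating \eqref{eq:def_u0}: the $s$-dependence sits in $U(0,\cdot)$ composed with $B_{\tau_s}$ (handled by the $1$-Lipschitz property of $U(0,\cdot)$ and a bound on $\EE|B_{\tau_s}-B_{\tau_{s'}}|$ via the optional sampling / Dambis--Dubins--Schwarz estimate $\EE|B_\sigma - B_\rho|\le \sqrt{\EE|\sigma-\rho|}$, with $\tau_1\le t$), and in the integral term, controlled by $\sup_s\partial_s U$; this produces a constant $C(t,x)$ with polynomial growth. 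Uniform Lipschitz continuity in $x$ follows from the $1$-Lipschitz property of $y\mapsto|y-x|$ applied pointwise inside the expectation in \eqref{eq:def_u}. The $\frac12$-H\"older continuity in $t$ follows from $\big|\,|B_{t\wedge\tau_s}-x|-|B_{t'\wedge\tau_s}-x|\,\big|\le |B_{t\wedge\tau_s}-B_{t'\wedge\tau_s}|$ together with $\EE|B_{t\wedge\tau_s}-B_{t'\wedge\tau_s}|^2 \le |t-t'|$ (Brownian increments, Itô isometry with a stopped integrand), uniformly over $\alpha^*$ since only the Brownian motion's law is used.

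\textbf{Step 3: the PDE characterization (iii).} I would prove that $u$ is a viscosity solution of \eqref{eq:PDE} by a standard dynamic programming argument on the representation \eqref{eq:def_u0}: the obstacle inequality $\partial_s(u-U)\ge 0$ is the monotonicity from (ii) together with $u\ge U(0,\cdot)\ge U(s,\cdot)$; the supersolution/subsolution properties of the parabolic operator $\partial_t u - \frac12\partial^2_{xx}u$ on the region where stopping is strictly suboptimal come from Dynkin's formula applied to test functions, exactly as in the derivation of the HJB variational inequality for optimal stopping. The boundary conditions $u|_{t=0}=u|_{s=0}=U(0,\cdot)$ are read off directly from \eqref{eq:def_u} (at $t=0$, $B_0=0$; at $s=0$, $\tau_0=0$). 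For uniqueness in the prescribed linear-growth class I would invoke a comparison principle for \eqref{eq:PDE}: build a strict supersolution perturbation of the form $w+\eta(K(1+t+|x|^2)+\epsilon\, \theta(s))$ to handle the unboundedness and the degeneracy in the $s$-direction, then run the classical doubling-of-variables argument of Crandall--Ishii--Lions adapted to this ``multi-time'' parabolic obstacle problem; this is structurally identical to the comparison arguments in \cite{COT} for equations (2.10)/(3.1), so I would follow that template, the only new point being that the $s$-variable enters only through a first-order monotone term, which actually simplifies the comparison. The most delicate part is ensuring the comparison principle genuinely applies in the stated growth class, i.e. constructing the perturbation that is simultaneously compatible with the parabolic operator in $(t,x)$ and with the first-order obstacle structure in $s$; once that is in place, existence from the optimal stopping value plus comparison yields uniqueness.
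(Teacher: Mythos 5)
The central gap is in your Step 1, in the direction ``$u\le\widetilde u$'' (the supremum in \eqref{eq:def_u0} dominating $u$). You propose to use ``the optimizers $\alpha^*_n$ themselves'', pass to the weak limit $\alpha^*$, and argue that the value of the limiting functional at $\alpha^*$ is at most the supremum. This does not work: $\alpha^*$ (and each Root embedding $\alpha^*_n$) is not an admissible competitor in $\Ac^0_t$ --- its stopping times are not bounded by $t$ and its initial law is $\mu_0$, not $\delta_0$ --- and, more importantly, the Root stopping times are not the optimizers of the discrete optimal stopping problems \eqref{eq:OS_discrTime}: the identity $u^n(s^n_j,t,x)=-\EE^{\mu_0}|B_{t\wedge\sigma^n_j}-x|$ in \cite{COT} comes from a time-reversal argument, so one cannot identify the limit of the discrete values with the functional of \eqref{eq:def_u0} evaluated at $\alpha^*$. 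The paper's argument (proof of Proposition \ref{prop:conv_un}) instead dominates the objective of \emph{every} discrete rule $\alpha\in\Ac^{n,0}_t$ by the continuous objective of an associated piecewise-constant rule $\hat\tau^\alpha$, using $U(s^n_j,y)-U(s,y)\le 0$ and rewriting the increments $\delta^n U$ as integrals of $\partial_s U$; this computation, which is the heart of part (i), is missing from your plan. (A smaller point: the map $(\om,a)\mapsto|\om_{t\wedge a_s}-x|$ is not continuous on $C(\R_+,\R)\x\A([0,1],\R_+)$, so weak convergence only gives $u^n\to u$ off a countable set of $s$; one then needs the right-continuity of $s\mapsto\tau^{\alpha^*}_s$ and the continuity of $\widetilde u$, as in Proposition \ref{prop:equiv_u}, to conclude for all $s$.)

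Two further steps would fail as written. For the Lipschitz estimate in $s$ you invoke $\EE|B_{\tau_s}-B_{\tau_{s'}}|\le\sqrt{\EE|\tau_s-\tau_{s'}|}$, but there is no modulus of continuity of $s\mapsto\tau^\alpha_s$ uniform over $\alpha\in\Ac^0_t$ (the only constraint is $\tau^\alpha_1\le t$), so this yields no bound in terms of $|s-s'|$. The correct ingredient is the obstacle inequality $u(s,t,x)-u(s',t,x)\ge U(s,x)-U(s',x)$ for $s'\le s$ (in the paper, $\delta^n u\ge\delta^n U$, obtained by taking $\theta=0$ in \eqref{eq:OS_discrTime}), which together with monotonicity and Assumption \ref{assum:U} gives the Lipschitz constant $C(1+|x|^p)$. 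The same inequality --- and not ``monotonicity plus $u\ge U$'', which only gives $\partial_s u\le 0$, i.e.\ the wrong direction --- is what yields the obstacle part $\partial_s(u-U)\ge 0$ of the supersolution property in your Step 3. Note also that \eqref{eq:def_u0} is not a standard optimal stopping problem (it involves a family of stopping times indexed by $s$ under the budget constraint $\tau^\alpha_1\le t$); the paper makes dynamic programming operational through the time-change reformulation \eqref{eq:represent_u_ctrl} with the controlled clock $Y^\gamma$ and the DPP \eqref{eq:DPP}, and your appeal to ``Dynkin's formula exactly as for the HJB variational inequality'' needs some such reformulation to be carried out. Your comparison step (strict supersolution perturbation, doubling of variables, Crandall--Ishii) is in line with the paper's Proposition \ref{prop:comparison}, except that the paper uses the linear-growth perturbation $U(0,x)+\eta(1+s+t)$, which is what keeps the argument inside the stated growth class.
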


%%%%%%%%%%%%%%%%%%%%%%%%

\section{Multiple-marginals Root solution of the SEP and its limit}
\label{sec:recall}
	The main objective of this section is to recall the construction of the Root solution to the SEP given multiple marginals from \cite{COT}.
	As an extension to the one-marginal Root solution studied in \cite{CoxWang} and \cite{GMO}, the solution to the multiple-marginals case enjoys some optimality property among all embeddings.
	We then also recall the limit argument in \cite{KTT} to show how the optimality property is preserved in the limit.

\subsection{The Root solution of the SEP given multiple marginals}
\label{subsec:Root_n}

	Let $n \in \N$ and $\pi_n$ be a partition of $[0,1]$, with $\pi_n = \{0=s^n_0<s^n_1<\dots<s^n_n=1\}$, 
	we then obtain $n$ marginal distributions $\mu^n := \{\mu_{s^n_j}\}_{j = 1, \cdots, n}$
	and recall the Root solution to the corresponding embedding problem.
	
	Let  $\Om = C(\R_+, \R)$ denote the canonical space of all continuous paths $\om: \R_+ \longrightarrow \R$ with $\om_0 = 0$, 
	$B$ be the canonical process, $B^x:=x+B$, $\F = (\Fc_t)_{t \ge 0}$ be the canonical filtration, $\Fc := \Fc_{\infty}$,
	and $\PP_0$ the Wiener measure under which $B$ is a standard Brownian motion.
	For each $t \ge 0$, let $\Tc_{0,t}$ denote the collection of all $\F$-stopping times taking values in $[0, t]$. Denote
	$$
	\delta^n U(s^n_j,x) \;:=\; U(s_j^n,x) - U(s_{j-1}^n,x),~~x\in\R,
	$$
	which is non-positive since $\{\mu_s\}_{s\in[0,1]}$ is non-decreasing in convex ordering.
	We then define the function $u^n(\cdot)$ by a sequence of optimal stopping problems:
	\begin{equation}\label{eq:OS_discrTime}
	\begin{split}
		&u^n\big|_{s=0} := U(s_0^n, .), ~\mbox{and}~ \\
		&u^n(s_j^n,t,x) 
		:= \sup_{\theta \in \Tc_{0,t}} \EE \left[u^n(s^n_{j-1}, t-\theta, B^x_\theta) + \delta^n U(s^n_j, B^x_{\theta}) \mathbf{1}_{\{\theta<t\}}\right].
	\end{split}
	\end{equation}
	Similarly to \eqref{eq:PDE}, the boundary condition $u^n\big|_{s=0} := U(s_0^n, .)$ means that
	$u^n(0,t,x) = U(s_0^n, x)$ for all $(t,x) \in \R_+ \x \R$.
Denoting similarly $\delta^n u(s^n_j,t,x) = u^n(s_j^n,t,x) - u^n(s_{j-1}^n,t,x)$, we define the corresponding stopping regions
	\begin{equation}\label{eq:defRegion}
		\mathcal{R}_j^n 
		~:=~
		\big\{(t,x)\in [0,\infty]\times [-\infty,\infty]:~ \delta^n u(s^n_j,t,x) = \delta^nU(s^n_j,x) \big\},
		~~j=1,\ldots,n.
	\end{equation}

	Given the above, the Root solution for the Brownian motion $B$ on the space $(\Om, \Fc, \PP_0)$,
	is given by the family $\tRoot^n = (\tRoot_1^n,\dots,\tRoot_n^n)$ of stopping times
	\begin{equation} \label{eq:nRoot}
		\tRoot^n_0 :=0,
		~~\mbox{and}~~
		~\tRoot_j^n :=  \inf \big\{t\geq \tRoot_{j-1}^n:~ (t,B_t)\in \mathcal{R}_j^n \big\},
		~~ \forall j\in \{1,\dots,n\}.
	\end{equation}
	The stopping times $\tRoot^n$ induce a stopping rule $\alpha^*_n$ in the sense of Definition \ref{def:embedding}:
	\begin{equation} \label{eq:nRoot_conti}
		\alpha^*_n 
		~=~
		\big(\Om^{\alpha^*_n}, \Fc^{\alpha^*_n}, \F^{\alpha^*_n}, \PP^{\alpha^*_n}, B^{\alpha^*_n}, \mu_0^{\alpha^*_n}, \tau^{\alpha^*_n} \big) 
		~:=~
		\big(\Om, \Fc, \F, \PP_0, B , \mu_0, \tau^{\alpha^*_n} \big),
	\end{equation}
	with $\tau^{\alpha^*_n}_s := \tRoot^n_j$ for $s \in [s^n_j, s^n_{j+1})$.
	Recall also $\Ac(\mu, \pi_n)$ defined in Definition \ref{def:embedding}.

	\begin{theorem*}[Cox, Ob{\l}{\'o}j and Touzi \cite{COT}]
		The stopping rule $\alpha^*_n$ is a $(\mu, \pi_n)$-embedding, with
		\begin{equation} \label{eq:RepresUn}
			u^n(s^n_j, t, x) ~=~ - \mathbb{E}^{\mu_0} |B_{t \wedge \tRoot^n_j} - x|.
		\end{equation}
Moreover, for all non-decreasing and non-negative $f: \R_+ \to \R_+$, we have		
		\begin{equation*}
			\EE^{\mu_0}\Big[\int_0^{\tRoot^n_n} f(t) dt \Big] 
			~=~
			\EE^{\PP^{\alpha^*_n}}\Big[\int_0^{\tau^{\alpha^*_n}_1} f(t) dt \Big] 
			~=~ 
			\inf_{\alpha \in \Ac(\mu, \pi_n)} \EE^{\PP^{\alpha}}\Big[\int_0^{\tau^{\alpha}_1} f(t) dt \Big].
		\end{equation*}
			\end{theorem*}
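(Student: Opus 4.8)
The plan is to argue by induction on $j\in\{0,1,\dots,n\}$, establishing simultaneously that $\sigma^n_j$ of \eqref{eq:nRoot} is a well-defined stopping time, that $B_{\sigma^n_j}\sim\mu_{s^n_j}$ with $(B_{t\wedge\sigma^n_j})_{t\ge0}$ uniformly integrable, and that the representation \eqref{eq:RepresUn} holds. The base case $j=0$ is immediate: $\sigma^n_0=0$, $B_0\sim\mu_0=\mu_{s^n_0}$ and $u^n(s^n_0,t,x)=U(s^n_0,x)=-\EE^{\mu_0}|B_{t\wedge 0}-x|$. So fix $j\ge1$ and assume all three statements at level $j-1$.

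The first step is the optimal-stopping/obstacle-problem analysis of the $j$-th problem in \eqref{eq:OS_discrTime}. By the induction hypothesis $u^n(s^n_{j-1},\cdot,\cdot)$ is continuous with controlled growth, and so is $U(s^n_j,\cdot)$, so classical theory yields that $u^n(s^n_j,\cdot,\cdot)$ is continuous, obeys the dynamic programming principle, and is the unique viscosity (equivalently distributional) solution with linear growth of $\min\{\partial_t u-\tfrac12\partial^2_{xx}u,\;u-g_j\}=0$ with $u|_{t=0}=U(0,\cdot)$, where $g_j:=u^n(s^n_{j-1},\cdot,\cdot)+\delta^n U(s^n_j,\cdot)$; taking $\theta=0$ in \eqref{eq:OS_discrTime} shows $u^n(s^n_j,\cdot,\cdot)\ge g_j$ everywhere, so the coincidence set is exactly $\mathcal{R}^n_j$ of \eqref{eq:defRegion}. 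A monotonicity lemma of Cox--Wang type (cf. \cite{CoxWang}) shows $\mathcal{R}^n_j$ is a genuine barrier, and the time-reversal correspondence between this obstacle problem and a Root hitting time --- the core mechanism of \cite{COT} --- combined with the strong Markov property at $\sigma^n_{j-1}$ and the induction hypothesis, produces the stopping rule \eqref{eq:nRoot} and reduces the task to computing the potential of $B_{\sigma^n_j}$.

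For that, I would apply the It\^o--Tanaka formula to $x\mapsto|B_{t\wedge\sigma^n_j}-x|$, handling the increment over $[0,\sigma^n_{j-1}]$ by the induction hypothesis and the increment over $[\sigma^n_{j-1},\sigma^n_j]$ via the occupation-times formula, and thus check that $v(t,x):=-\EE^{\mu_0}|B_{t\wedge\sigma^n_j}-x|$ solves the same obstacle problem as $u^n(s^n_j,\cdot,\cdot)$ with the same $t=0$ value and the same growth; uniqueness then gives $v\equiv u^n(s^n_j,\cdot,\cdot)$, which is \eqref{eq:RepresUn}. Letting $t\uparrow\infty$ and invoking the convex-ordering hypothesis --- which is precisely what forces the continuation region of the $j$-th problem to vanish at infinity, so that the obstacle $g_j$ is attained in the limit --- one obtains $u^n(s^n_j,\infty,x)=U(s^n_j,x)$ for all $x$, i.e. $\EE^{\mu_0}|B_{\sigma^n_j}-x|=\int_\R|x-y|\,\mu_{s^n_j}(dy)$; since potential functions characterise probability measures with finite first moment, $B_{\sigma^n_j}\sim\mu_{s^n_j}$, and uniform integrability of $(B_{t\wedge\sigma^n_j})_t$ follows from $\EE^{\mu_0}[\sigma^n_j]<\infty$. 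This closes the induction, so $\alpha^*_n\in\Ac(\mu,\pi_n)$, and the first equality in the final display is then read off \eqref{eq:nRoot_conti}.

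It remains to prove the optimality identity. Observing that for non-decreasing non-negative $f$ the map $h(t):=\int_0^t f(r)\,dr$ is convex, non-decreasing and vanishes at $0$, so that $\EE^{\PP^\alpha}[\int_0^{\tau^\alpha_1}f(t)\,dt]=\EE^{\PP^\alpha}[h(\tau^\alpha_1)]$, I would run a dual comparison through the value functions in the spirit of \cite{Rost} and \cite{COT}: using the sub-optimality inequalities for \eqref{eq:OS_discrTime} along an arbitrary competitor's stopping times $\tau^\alpha_{s^n_k}$ and telescoping over $k=n,\dots,1$, one gets a lower bound for $\EE^{\PP^\alpha}[h(\tau^\alpha_1)]$ depending only on $(\mu,\pi_n,f)$; for $\alpha^*_n$ every inequality becomes an equality, because the Root stopping times are by construction optimal for the associated optimal stopping problems. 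The step I expect to be the main obstacle is the passage $t\uparrow\infty$ in the previous paragraph, i.e. proving that the continuation region of the $j$-th problem does not survive at infinity, so that the potential of $B_{\sigma^n_j}$ is exactly $U(s^n_j,\cdot)$ and not that of some measure strictly dominated by $\mu_{s^n_j}$ in convex order; this is where the convex-ordering assumption and the detailed structure of $g_j$ (hence of $u^n(s^n_{j-1},\cdot,\cdot)$) are genuinely used, and it presupposes the regularity and uniqueness statements for the obstacle problem --- themselves non-trivial, cf. \cite{CoxWang} --- that are needed to legitimise both the It\^o--Tanaka computation and the comparison argument.
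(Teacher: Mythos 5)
This statement is not proved in the paper at all: it is quoted verbatim from \cite{COT} as an input, so there is no internal proof to compare with, and your sketch is in effect an attempt to reconstruct the argument of \cite{COT}. Its broad architecture (induction over the marginals, optimal stopping/obstacle problem for each step, identification of the potential of the stopped process with $u^n$, then optimality) is indeed the strategy of that paper, but as written it has genuine gaps precisely at the points that constitute the theorem's content. First, the identification $u^n(s^n_j,t,x)=-\EE^{\mu_0}|B_{t\wedge\sigma^n_j}-x|$ is asserted by saying that both sides ``solve the same obstacle problem''; but the recursion \eqref{eq:OS_discrTime} is a \emph{time-reversed} problem --- the previous value function enters as $u^n(s^n_{j-1},t-\theta,B^x_\theta)$, evaluated at the reversed time $t-\theta$ --- whereas an It\^o--Tanaka computation on the forward process stopped at $\sigma^n_j$ evolves in forward time from the law $\mu_{s^n_{j-1}}$ at the random time $\sigma^n_{j-1}$. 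Matching the two is exactly the time-reversal correspondence that you invoke as ``the core mechanism of \cite{COT}'', i.e.\ as a black box; conceding it concedes the heart of the proof. Likewise the passage $t\uparrow\infty$, which you yourself flag, is not a routine limit: it amounts to showing $\sigma^n_j<\infty$ a.s.\ and that the hitting time of the coincidence set \eqref{eq:defRegion} actually embeds $\mu_{s^n_j}$, which in \cite{CoxWang,COT} requires quantitative barrier properties (regularity, non-degeneracy of the barrier where the potentials differ), none of which are supplied or reduced to a checkable statement here.

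Second, the optimality paragraph does not yet contain an argument. An arbitrary competitor $\alpha\in\Ac(\mu,\pi_n)$ supplies stopping times on its own filtered space; they are not admissible controls for the optimal stopping problems \eqref{eq:OS_discrTime}, which are indexed by a fixed horizon $t$ and run in reversed time, so ``sub-optimality inequalities along the competitor's stopping times, telescoped over $k$'' is not a well-defined comparison without first converting the value functions into a pathwise (or dual) inequality valid for every embedding --- this is the Rost-type/convex-duality step in \cite{Rost,COT}, and it uses the representation \eqref{eq:RepresUn} together with additional structure of the barriers. Similarly, the claim that ``for $\alpha^*_n$ every inequality becomes an equality because the Root stopping times are by construction optimal'' presupposes that the stopped path sits in the stopping region in the right way (attainment of the obstacle at $(\sigma^n_j,B_{\sigma^n_j})$ for the relevant reversed problem), which again rests on the unproven time-reversal identification. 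So the proposal is a reasonable roadmap consistent with \cite{COT}, but the three load-bearing steps --- the time-reversal identity behind \eqref{eq:RepresUn}, the embedding property in the limit $t\to\infty$, and the duality underlying the optimality display --- are named rather than proved.
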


	Using a dynamic programming argument, one
	can also reformulate the definition of $u^n$ in \eqref{eq:OS_discrTime} by induction as a global multiple optimal stopping problem.
	Let us denote by $\Tc^n_{0,t}$ the collection of all terms $(\tau_1, \cdots, \tau_n)$, 
	where each $\tau_j$, $j=1, \cdots, n$, is a $\F$-stopping time on $(\Om, \Fc, \PP_0)$ satisfying $0 \le \tau_1 \le \ldots \le \tau_n \le t$.

	\begin{proposition}\label{prop:iterated}
		For all $j = 1, \cdots, n$, we have
		\begin{equation}\label{eq:OS_discrTime_iterated}
			u^n(s^n_j,t,x) 
			= 
			\sup_{(\tau_1, \dots, \tau_n) \in \Tc^n_{0,t}} 
			\EE\Big[U(0,x+B_{\tau_j}) +\sum_{k=1}^j \delta^nU(s^n_k,x+B_{\tau_{j-k+1}}) \mathbf{1}_{\{\tau_{j-k+1}<t\}}\Big].
		\end{equation}
	\end{proposition}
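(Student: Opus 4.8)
The plan is to prove the identity \eqref{eq:OS_discrTime_iterated} by induction on $j$, unwinding the single-step dynamic programming recursion \eqref{eq:OS_discrTime} into a nested family of optimal stopping problems and then using the strong Markov property of Brownian motion together with the Bellman principle to collapse the nested suprema into a single supremum over the set $\Tc^n_{0,t}$ of ordered stopping times. For $j=1$, the right-hand side of \eqref{eq:OS_discrTime_iterated} reads $\sup_{\tau_1\le t}\EE[U(0,x+B_{\tau_1})+\delta^n U(s^n_1,x+B_{\tau_1})\mathbf 1_{\{\tau_1<t\}}]$, and since $U(0,\cdot)=u^n(s^n_0,\cdot)$ does not depend on the remaining time, this is exactly \eqref{eq:OS_discrTime} at $j=1$ with $\theta=\tau_1$; here one should note that the suprema over $(\tau_1,\dots,\tau_n)\in\Tc^n_{0,t}$ and over $\tau_1\le t$ alone agree because the integrand for $j=1$ depends only on $\tau_1$.

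For the induction step, assume \eqref{eq:OS_discrTime_iterated} holds at $j-1$ for every $(t,x)$. Starting from \eqref{eq:OS_discrTime},
\begin{equation*}
	u^n(s^n_j,t,x)=\sup_{\theta\in\Tc_{0,t}}\EE\big[u^n(s^n_{j-1},t-\theta,B^x_\theta)+\delta^nU(s^n_j,B^x_\theta)\mathbf 1_{\{\theta<t\}}\big],
\end{equation*}
I would substitute the induction hypothesis for $u^n(s^n_{j-1},t-\theta,y)$ evaluated at $y=B^x_\theta$, which expresses it as a supremum over $(\tau_1,\dots,\tau_{j-1})\in\Tc^{n}_{0,t-\theta}$ (with an analogous convention as in the statement) of $\EE[U(0,y+B_{\tau_{j-1}})+\sum_{k=1}^{j-1}\delta^nU(s^n_k,y+B_{\tau_{j-k}})\mathbf 1_{\{\tau_{j-k}<t-\theta\}}]$. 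Then, conditioning on $\Fc_\theta$ and applying the strong Markov property at $\theta$, the increments $B_{\theta+\tau_i}-B_\theta$ are a Brownian motion independent of $\Fc_\theta$, so setting $\tau_i':=\theta+\tau_i$ for $i=1,\dots,j-1$ and $\tau_j':=\theta$ reorganizes the two families of indices: the ``inner'' stopping times shift to lie in $[\theta,t]$, the event $\{\tau_i<t-\theta\}$ becomes $\{\tau'_i<t\}$, and the term $\delta^nU(s^n_j,B_\theta)\mathbf 1_{\{\theta<t\}}$ is the $k=j$ summand in the target expression. After relabelling $(\tau'_1,\dots,\tau'_{j-1},\tau'_j)$ so that they are listed in increasing order $0\le\tau'_1\le\dots\le\tau'_j\le t$ — which is automatic since $\tau'_i=\theta+\tau_i\ge\theta=\tau'_j$ is the wrong direction, so in fact the convention must be that the inner family is indexed so the composed times are increasing; I would fix the bookkeeping by writing $\tau_{j-k+1}$ as in \eqref{eq:OS_discrTime_iterated}, matching the reversed indexing already used there — one obtains exactly the integrand of \eqref{eq:OS_discrTime_iterated} at level $j$.

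The two inequalities are then handled separately. For ``$\le$'': any $\theta$ and any inner family give, after the above change of variables, an admissible element of $\Tc^n_{0,t}$, so the nested supremum is dominated by the single supremum. For ``$\ge$'': given $(\tau_1,\dots,\tau_j)\in\Tc^n_{0,t}$, take $\theta:=\tau_1$ (the smallest), which is in $\Tc_{0,t}$, and take the inner family to be $(\tau_2-\tau_1,\dots,\tau_j-\tau_1)$ post-composed appropriately; by the strong Markov property these are valid stopping times for the shifted Brownian motion with values in $[0,t-\theta]$, so the single-step problem at $\theta$ is at least the value of this configuration, and taking suprema gives the reverse inequality. The measurability of $(t',y)\mapsto u^n(s^n_{j-1},t',y)$ and the joint measurability needed to justify substituting a random argument into a value function — together with the interchange of conditional expectation and supremum over stopping times (a standard dynamic-programming measurable-selection point) — is the one delicate step; I expect the main obstacle to be precisely this measurable-selection/interchange argument and the careful index bookkeeping in the relabelling $\tau_i\mapsto\theta+\tau_i$, rather than anything analytically deep, since both can be imported from the standard theory of iterated optimal stopping (and from the corresponding construction in \cite{COT}).
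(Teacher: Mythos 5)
Your proposal is correct and follows essentially the same route as the paper: both unwind the one-step recursion \eqref{eq:OS_discrTime} via the dynamic programming principle and the strong Markov property, shift the inner stopping times by $\theta$ and reindex to merge the nested suprema into a single supremum over $\Tc^n_{0,t}$ (your induction on $j$ is just the bottom-up phrasing of the paper's iterated expansion down to $u^n(0,\cdot,\cdot)=U(0,\cdot)$). The measurable-selection/filtration-enlargement point you flag is handled in the paper at the same level of rigor, by invoking the DPP and Remark \ref{rem:Acn}.
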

	\begin{proof}
	We will use a backward induction argument.
	First, let us denote by $\Tc_{r,t}$ the collection of all $\F$-stopping times taking values in $[r,t]$, and by $B^{r,x}_s := x + B_s - B_r$ for all $s \ge r$.
	Then it follows from the expression \eqref{eq:OS_discrTime} that
	$$
		u^n (s^n_{j-1}, t - r, x) 
		= 
		\sup_{ \tau \in \Tc_{r,t}} \EE \big[
			u^n(s^n_{j-2}, t - \tau, B^{r,x}_{\tau}) + \delta^n U(s^n_{j-1}, B^{r,x}_{\tau}) \mathbf{1}_{\{\tau < t\}}
		\big].
	$$
	Using the dynamic programming principle, one has
\begin{eqnarray*}
	u^n(s^n_j,t,x) 
	&=&
	\sup_{\tau_1 \in \Tc_{0,t} } 
	\EE\bigg[
		u^n(s^n_{j-1}, t-\tau_1, B^{0,x}_{\tau_1})
		+ \delta^nU(s^n_j,B^{0,x}_{\tau_1}) \mathbf{1}_{\{\tau_1<t\}}
	\bigg] \\
	&= & 
	\sup_{\tau_1 \in \Tc_{0,t} } 
	\EE\bigg[
		\mathrm{ess}\sup_{ \tau_2 \in \Tc_{\tau_1,t}} \EE \Big[
			u^n(s^n_{j-2}, t- \tau_2, B^{\tau_1,B^{0,x}_{\tau_1}}_{\tau_2}) \\
	&&~~~~~~~~~~~~~
		+ \delta^n U(s^n_{j-1}, B^{\tau_1, B^{0,x}_{\tau_1}}_{\tau_2}) \mathbf{1}_{\{\tau_1 < t\}}
		\Big| \Fc_{\tau_1} \Big]
		+ \delta^nU(s^n_j,B^{0,x}_{\tau_1}) \mathbf{1}_{\{\tau_1<t\}}
	\bigg] \\
	&=& \sup_{(\tau_1, \tau_2) \in \Tc^2_{0,t}}  
	\EE \bigg[
		u^n(s^n_{j-2}, t- \tau_2, B^{0,x}_{\tau_2})
		+\sum_{k=j-1}^j \delta^nU(s^n_k, B^{0,x}_{\tau_{j-k+1}}) \mathbf{1}_{\{\tau_{j-k+1}<t\}}
	\bigg].
\end{eqnarray*}
	To conclude, it is enough to apply the same argument to iterate and to use the fact that 
	$u^n(0,t,x) = U(0,x)$ for any $t$ and $x$.
\end{proof}

	\begin{remark} \label{rem:Acn}
		For later use,  we also observe that 
		it is not necessary to restrict the stopping times w.r.t. the Brownian filtration,
		in the optimal stopping problem \eqref{eq:OS_discrTime_iterated}.
		In fact, one can consider a larger filtration with respect to which $B$ is still a Brownian motion.
		More precisely, let $\Ac^{n,0}_{t}$ denote the collection of all stopping rules 
		$$\alpha = (\Om^{\alpha}, \Fc^{\alpha}, \F^{\alpha}, \PP^{\alpha}, B^{\alpha} , \delta_0, \tau^{\alpha}_j, j=1, \cdots n)$$ 
		such that
		$ (\Om^{\alpha}, \Fc^{\alpha}, \F^{\alpha}, \PP^{\alpha})$ is a filtered probability space equipped with a standard Brownian motion $B^{\alpha}$ and  $(\tau^{\alpha}_j)_{j=1, \cdots, n}$ is a sequence of stopping times satisfying
		$0 \le \tau^{\alpha}_1 \le \ldots \le \tau^{\alpha}_n \le t$.
		Then one has
		$$
			u^n(s^n_j, t,x) = \sup_{\alpha \in \Ac^{n,0}_{t}}
			\EE^{\PP^{\alpha}} \Big[U(0,x+B^{\alpha}_{\tau^{\alpha}_j }) +\sum_{k=1}^j \delta^nU(s^n_k,x+B^{\alpha}_{\tau^{\alpha}_{j-k+1}}) \mathbf{1}_{\{\tau^{\alpha}_{j-k+1}<t\}}\Big].
		$$
		This equivalence is standard and very well known in case $n=1$, see also Lemma 4.9 of \cite{GTT_SEP} for the multiple stopping problem where $n \ge 1$.
	\end{remark}

\subsection{The Root solution given full marginals (Theorem \ref{thm:KTTintro}.$\mathrm{(i)}$)}
\label{subsec:cvg}

	By the same argument as in K{\"a}llblad, Tan and Touzi \cite{KTT}, the sequence of Root stopping times $(\sigma^n_1, \cdots, \sigma^n_n)_{n \ge 1}$ is tight in some sense
	and any limit provides an embedding solution given full marginals.
	
	\vspace{0.5em}
	
	More precisely, let us consider $n$-marginals $(\mu_{s^n_k})_{k =1 ,\cdots, n}$,
	and $(\sigma^n_k)_{k =1, \cdots, n}$ be the Root embedding defined in \eqref{eq:nRoot},
	we define $\alpha^*_n$ by \eqref{eq:nRoot_conti} as a $(\mu, \pi_n)$-embedding in sense of Definition \ref{def:embedding}.
	Notice that $\Pb^n := \PP^{\alpha^*_n} \circ (B^{\alpha^*_n}_{\cdot}, \tau^{\alpha^*_n}_{\cdot})^{-1}$ is a probability measure on the Polish space $C(\R_+, \R) \x \A([0,1], \R_+)$.
%	,
%	which is a Polish space if $C(\R_+, \R)$ is  equipped with the compact convergence topology and 
%	$\A([0,1], \R_+)$ is equipped with the L\'evy metric.
	This allows us to consider the weak convergence of the sequence $(\alpha^*_n)_{n \ge 1}$.
	Theorem \ref{thm:KTTintro} is then a consequence of the following convergence theorem, 
	which can be gathered from several results in \cite{KTT}.
	Recall also that $\Ac(\mu)$ and $\Ac(\mu, \pi_n)$ are defined in Definition \ref{def:embedding}.

	\begin{proposition} \label{prop:KTT}
	Let $(\pi_n)_{n \ge 1}$ be a sequence of partitions of $[0,1]$ with mesh $|\pi_n| \to 0$, and let $\alpha^*_n$ be the corresponding multiple-marginals Root embedding \eqref{eq:nRoot_conti}. Then, the sequence $\big( B^{\alpha^*_n}_{\cdot}, \tau^{\alpha^*_n}_{\cdot} \big)$ is tight, and any limit $\alpha^*$ is a full marginals embedding, i.e. $\alpha^* \in \Ac(\mu)$, with
		$$
			\PP^{\alpha^*_{n_k}} \circ (B^{\alpha^*_{n_k}}_{\tau^{\alpha^*_{n_k}}_s})^{-1} \longrightarrow \PP^{\alpha^*} \circ (B^{\alpha^*}_{\tau^{\alpha^*}_s})^{-1},
			~~\mbox{for all}~~
			s \in [0,1] \setminus \T,
		$$
	for some countable set $\T \subset [0, 1)$, and some subsequence $(n_k)_{k \ge 1}$, and 
	$$
			\EE^{\PP^{\alpha^*}} \Big[ \int_0^{\tau^{\alpha^*}_1} f(t) dt \Big]
			~=~
			\inf_{\alpha \in \Ac(\mu)} \EE^{\PP^{\alpha}} \Big[  \int_0^{\tau^{\alpha}_1} f(t) dt \Big],
		$$
for any non-decreasing and non-negative function $f: \R_+ \to \R_+$.
			\end{proposition}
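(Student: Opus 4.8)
The plan is to follow the arguments of \cite{KTT} in three steps: tightness of the sequence $\big(B^{\alpha^*_n}_\cdot,\tau^{\alpha^*_n}_\cdot\big)_{n\ge1}$ on the Polish space $C(\R_+,\R)\times\A([0,1],\R_+)$; identification of any subsequential limit $\alpha^*$ as an element of $\Ac(\mu)$, together with the stated marginal convergence; and transfer of the optimality property from the finitely-many-marginals Root embedding. For the first step, observe that the law of $B^{\alpha^*_n}$ on $C(\R_+,\R)$ is, for every $n$, the Wiener measure with initial distribution $\mu_0$, so this coordinate is trivially tight; since tightness of each marginal implies tightness of the joint law, it remains to show that $\big(\tau^{\alpha^*_n}_\cdot\big)_{n\ge1}$ is tight on $\A([0,1],\R_+)$. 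As the L\`evy metric metrizes the weak convergence topology of finite measures on the compact interval $[0,1]$, Prokhorov's theorem reduces this to tightness of the total masses $\tau^{\alpha^*_n}_1=\sigma^n_n$ in $\R_+$. This uniform bound is the estimate underlying the tightness statement of \cite{KTT}; when the $\mu_s$ have finite second moments it follows directly by applying Tanaka's formula to $|B^{\alpha^*_n}_{t\wedge\sigma^n_n}-x|$ and the occupation time formula, which give $\EE^{\PP^{\alpha^*_n}}\big[\sigma^n_n\big]=\int_\R\big(U(0,x)-U(1,x)\big)\,dx$, finite and independent of $n$, while in general it rests on the uniform integrability of $(B^{\alpha^*_n}_{t\wedge\sigma^n_n})_{t\ge0}$ built into the notion of a $(\mu,\pi_n)$-embedding.

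For the second step, fix a weakly convergent subsequence, still denoted $(\alpha^*_n)$, and invoke the Skorokhod representation theorem to realise the convergence $\PP$-a.s.\ on a common probability space, so that $B^{\alpha^*_n}\to B^{\alpha^*}$ uniformly on compacts and $\tau^{\alpha^*_n}_\cdot\to\tau^{\alpha^*}_\cdot$ in the L\`evy metric. Then $B^{\alpha^*}$ is a Brownian motion with initial law $\mu_0$ and $s\mapsto\tau^{\alpha^*}_s$ is c\`adl\`ag, non-decreasing, with $\tau^{\alpha^*}_0=0$; the fact that the limit can be carried by a filtered probability space on which $B^{\alpha^*}$ is still a Brownian motion with each $\tau^{\alpha^*}_s$ a stopping time, and that $(B^{\alpha^*}_{t\wedge\tau^{\alpha^*}_1})_{t\ge0}$ is uniformly integrable, is precisely the content of the stability results of \cite{KTT}. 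To identify the marginals, let $\T\subset[0,1)$ be the countable set consisting of the jump points of the bounded non-decreasing function $s\mapsto\EE^{\PP^{\alpha^*}}[\arctan\tau^{\alpha^*}_s]$ together with the discontinuity points of $s\mapsto\mu_s$ lying in $[0,1)$; by monotone convergence, for $s\notin\T$ one has $\tau^{\alpha^*}_{s^-}=\tau^{\alpha^*}_s$ $\PP$-a.s., so $s$ is $\PP$-a.s.\ a continuity point of $\tau^{\alpha^*}_\cdot$ (while at $s=1$ the convergence $\tau^{\alpha^*_n}_1\to\tau^{\alpha^*}_1$ holds in any case, the total mass being a continuous functional), whence $\tau^{\alpha^*_n}_s\to\tau^{\alpha^*}_s$ and, by composition with the uniform convergence of the Brownian paths, $B^{\alpha^*_n}_{\tau^{\alpha^*_n}_s}\to B^{\alpha^*}_{\tau^{\alpha^*}_s}$ $\PP$-a.s. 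Since $B^{\alpha^*_n}_{\tau^{\alpha^*_n}_s}=B^{\alpha^*_n}_{\sigma^n_{j_n}}\sim\mu_{s^n_{j_n}}$ where $s^n_{j_n}\le s<s^n_{j_n+1}$, so that $s^n_{j_n}\to s$, and since $s\mapsto\mu_s$ is continuous at each $s\notin\T$, passing to the limit yields $\PP^{\alpha^*}\circ(B^{\alpha^*}_{\tau^{\alpha^*}_s})^{-1}=\mu_s$ for all $s\notin\T$, which is the stated convergence; a right-continuity argument in $s$ (using the countability of $\T$, the right-continuity of $s\mapsto\tau^{\alpha^*}_s$ and of $s\mapsto\mu_s$, and the continuity of $B^{\alpha^*}$) then upgrades this to $B^{\alpha^*}_{\tau^{\alpha^*}_s}\sim\mu_s$ for every $s\in[0,1]$, so $\alpha^*\in\Ac(\mu)$.

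For the third step, fix a non-decreasing non-negative $f:\R_+\to\R_+$. Every $\mu$-embedding embeds in particular $\mu_{s^n_0},\dots,\mu_{s^n_n}$, so $\Ac(\mu)\subseteq\Ac(\mu,\pi_n)$ for each $n$, whence by the optimality of the finitely-many-marginals Root embedding recalled above,
$$ \EE^{\PP^{\alpha^*_n}}\Big[\int_0^{\tau^{\alpha^*_n}_1}f(t)\,dt\Big] =\inf_{\alpha\in\Ac(\mu,\pi_n)}\EE^{\PP^\alpha}\Big[\int_0^{\tau^\alpha_1}f(t)\,dt\Big] \le\inf_{\alpha\in\Ac(\mu)}\EE^{\PP^\alpha}\Big[\int_0^{\tau^\alpha_1}f(t)\,dt\Big]. $$
On the other hand $a\mapsto\int_0^{a(1)}f(t)\,dt$ is a non-negative continuous functional on $\A([0,1],\R_+)$ (the total mass $a(1)$ depends continuously on $a$ for the weak topology, and $r\mapsto\int_0^r f(t)\,dt$ is continuous), so the portmanteau theorem along the converging subsequence gives
$$ \EE^{\PP^{\alpha^*}}\Big[\int_0^{\tau^{\alpha^*}_1}f(t)\,dt\Big] \le\liminf_{n\to\infty}\EE^{\PP^{\alpha^*_n}}\Big[\int_0^{\tau^{\alpha^*_n}_1}f(t)\,dt\Big] \le\inf_{\alpha\in\Ac(\mu)}\EE^{\PP^\alpha}\Big[\int_0^{\tau^\alpha_1}f(t)\,dt\Big]. $$
Since $\alpha^*\in\Ac(\mu)$, the reverse inequality is immediate, so all these quantities coincide, which is the assertion.

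The technical heart lies in the first two steps, where the framework of \cite{KTT} is essential: obtaining the uniform control on $\tau^{\alpha^*_n}_1$, hence tightness, under only a finite-first-moment assumption, and, more delicately, showing that a weak limit of stopping rules is again a stopping rule — that the limiting data $(B^{\alpha^*}_\cdot,\tau^{\alpha^*}_\cdot)$ admits a compatible filtration making $B^{\alpha^*}$ a Brownian motion with the $\tau^{\alpha^*}_s$ stopping times and $(B^{\alpha^*}_{t\wedge\tau^{\alpha^*}_1})$ uniformly integrable. The remaining ingredients (Prokhorov, Skorokhod representation, portmanteau, and the inclusion $\Ac(\mu)\subseteq\Ac(\mu,\pi_n)$) are routine.
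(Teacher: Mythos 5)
Your proposal is correct, and its skeleton coincides with the paper's: the paper proves item (i) by invoking Lemma 4.5 of \cite{KTT} outright, and item (ii) by noting that $\Phi(\om_\cdot,\theta_\cdot):=-\int_0^{\theta_1}f(t)\,dt$ is continuous and bounded from above on $C(\R_+,\R)\x\A([0,1],\R_+)$ and then citing Theorem 2.3 and Proposition 3.6 of \cite{KTT}. You expand the first part (tightness via Prokhorov on finite measures over the compact $[0,1]$, Skorokhod representation, identification of the marginals at the points $s\notin\T$ where $\tau^{\alpha^*}_{s-}=\tau^{\alpha^*}_s$ a.s.\ and $s\mapsto\mu_s$ is continuous, then the right-continuity upgrade), but like the paper you ultimately defer the two genuinely delicate points --- the uniform control on $\tau^{\alpha^*_n}_1$ under a first-moment assumption only, and the fact that a weak limit of stopping rules can again be realized as a stopping rule with $(B^{\alpha^*}_{t\wedge\tau^{\alpha^*}_1})_{t\ge 0}$ uniformly integrable --- to the stability results of \cite{KTT}, which is exactly where the paper places them. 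Where you genuinely deviate is the optimality transfer: instead of quoting the abstract convergence/duality statements of \cite{KTT}, you give a self-contained sandwich argument combining the inclusion $\Ac(\mu)\subseteq\Ac(\mu,\pi_n)$, the finitely-many-marginals optimality of \cite{COT}, and lower semicontinuity of $a\mapsto\int_0^{a(1)}f(t)\,dt$ (a nonnegative functional, continuous since the Lévy metric metrizes weak convergence of finite measures, so total mass is continuous) via the portmanteau theorem along the convergent subsequence. This buys transparency and avoids the machinery behind Theorem 2.3/Proposition 3.6 of \cite{KTT} for this particular cost functional, at no loss of rigour; the paper's citation-based route is shorter and covers the same ground in one stroke.
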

	\begin{proof}
		$\mathrm{(i)}$ The first item is a direct consequence of Lemma 4.5 of \cite{KTT}.
		
		\noindent $\mathrm{(ii)}$ For the second item, we notice that 
		$\Phi(\om_\cdot,\theta_\cdot) := -\int_0^{\theta_1} f(t) dt$ is a continuous function defined on 
		$C(\R_+, \R) \x \A([0,1], \R_+)$ and bounded from above.
		Then it is enough to apply Theorem 2.3  and Proposition 3.6 of \cite{KTT} to obtain the optimality of $\alpha^*$.
	\end{proof}

\section{Proof of Theorems  \ref{th:OSP} and \ref{thm:KTTintro}.$\mathrm{(ii)}$}
\label{sec:proofs}

	Recall that $u(s,t,\cdot)$ is defined  in \eqref{eq:def_u} as the potential function of $B^{\alpha^*}_{t\wedge \tau^{\alpha^*}_s}$
	for an arbitrary Root solution $\alpha^*$ given full marginals.
	We provide an optimal stopping problem characterization as well as a PDE characterization for the function $u$ under Assumption \ref{assum:U}, which provides a proof of Theorem \ref{th:OSP}.
	Further, the uniqueness of the solution to the PDE induces the uniqueness result in part $\mathrm{(ii)}$ of Theorem  \ref{thm:KTTintro}.

\subsection{Characterization of $u$ by an optimal stopping problem (Theorem \ref{th:OSP}.$\mathrm{(i)}$)}

	By a slight abuse of notation, we can extend the definition of $u^n$ given in \eqref{eq:OS_discrTime} to $[0,1] \x \R_+ \x \R$ by setting
	$$
		u^n(s,t,x) := u^n(s^n_j, t,x) ~~~\mbox{whenever}~s \in (s^n_{j-1}, s^n_j].
	$$
	With $\Ac_t$ in Definition \ref{def:embedding}, we also define $\widetilde{u}$ as a mapping from $[0,1]\times\R_+\times\R$ to $\R$ by
	\begin{equation}\label{eq:def_u_intro}
		\widetilde{u}(s,t,x) 
		~:=~ 
		\sup_{\alpha \in \Ac_t^0} 
		\EE^{\PP^{\alpha}} \left[ 
			U(0,x+B^{\alpha}_{\tau^{\alpha}_s}) + \int_0^s \partial_s U(s-k,x+B^{\alpha}_{\tau^{\alpha}_k}) \mathbf{1}_{\{\tau^{\alpha}_k<t\}} dk 
		\right].
	\end{equation}
	The main objective of this section is to provide some characterisation of this limit law as well as the limit problem of $u^n$ \eqref{eq:OS_discrTime} used in the construction of the Root solution.

	\begin{proposition}\label{prop:conv_un}
		For all $(s,t,x)$, one has $u^n(s,t,x) \to \widetilde{u}(s,t,x)$ as $n \to \infty$.
	\end{proposition}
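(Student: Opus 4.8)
The plan is to show that $u^n(s,t,x)$, expressed via the iterated optimal stopping problem of Proposition \ref{prop:iterated} (and its stopping-rule reformulation in Remark \ref{rem:Acn}), converges to $\widetilde u(s,t,x)$ by matching the two variational problems term by term. First I would rewrite $u^n(s^n_j,t,x)$ using Remark \ref{rem:Acn} as a supremum over stopping rules $\alpha\in\Ac^{n,0}_t$ of
\[
  \EE^{\PP^\alpha}\Big[U(0,x+B^\alpha_{\tau^\alpha_j})+\sum_{k=1}^j \delta^n U(s^n_k,\,x+B^\alpha_{\tau^\alpha_{j-k+1}})\mathbf 1_{\{\tau^\alpha_{j-k+1}<t\}}\Big].
\]
Under Assumption \ref{assum:U}, $\delta^n U(s^n_k,y)=\int_{s^n_{k-1}}^{s^n_k}\partial_s U(r,y)\,dr$, so the discrete sum is a Riemann-type approximation of the integral $\int_0^s \partial_s U(s-k,x+B^\alpha_{\tau^\alpha_k})\mathbf 1_{\{\tau^\alpha_k<t\}}\,dk$ appearing in \eqref{eq:def_u_intro}, once one identifies the index $j$ with $s\in(s^n_{j-1},s^n_j]$ and reparametrizes the (piecewise-constant-in-$s$) stopping rule $\tau^\alpha$. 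The polynomial-growth bound on $\sup_s \partial_s U$ together with the uniform integrability of $B^\alpha_{\cdot\wedge\tau^\alpha_1}$ (and the a priori bound $\EE^{\PP^\alpha}[\tau^\alpha_1]\le t$, since $\alpha\in\Ac^0_t$) gives the uniform integrability needed to pass the Riemann sum to the integral.

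The argument then splits into the two inequalities. For $\liminf_n u^n \ge \widetilde u$: given $\eps>0$ and a near-optimal $\alpha\in\Ac^0_t$ for $\widetilde u(s,t,x)$, I would discretize $\alpha$ along $\pi_n$ — replacing $\tau^\alpha_\cdot$ by $\tau^\alpha_{s^n_j}$ on $[s^n_j,s^n_{j+1})$ — to produce an admissible competitor for $u^n$, and estimate the error using the Hölder/Lipschitz-type continuity of $U$ and dominated convergence; the c\`adl\`ag, non-decreasing structure of $s\mapsto\tau^\alpha_s$ ensures the discretized rule converges to $\alpha$ at all but countably many $s$, which suffices. For $\limsup_n u^n \le \widetilde u$: from a near-optimal $\alpha_n\in\Ac^{n,0}_t$ for $u^n(s^n_{j_n},t,x)$ (with $s^n_{j_n}\to s$), I would interpolate the finitely many stopping times into a full c\`adl\`ag non-decreasing family (constant between consecutive $\tau^{\alpha_n}_{s^n_k}$), obtaining $\alpha_n\in\Ac^0_t$, and pass to the limit — using tightness of $(B^{\alpha_n}_\cdot,\tau^{\alpha_n}_\cdot)$ as in Proposition \ref{prop:KTT} to extract a weak limit, and lower/upper semicontinuity of the (continuous, upper-bounded) objective functional on $C(\R_+,\R)\times\A([0,1],\R_+)$ — to see that the limit is admissible for $\widetilde u$ and achieves at least $\limsup_n u^n$.

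The main obstacle I expect is the passage to the limit in the objective along the discretized stopping rules: the map $(\om,\theta)\mapsto \int_0^s \partial_s U(s-k,x+\om_{\theta_k})\mathbf 1_{\{\theta_k<t\}}\,dk$ involves the indicator $\mathbf 1_{\{\theta_k<t\}}$, which is only lower (or upper) semicontinuous in $\theta$, so the functional is not continuous on the path space; one must handle the set $\{\theta_k=t\}$ carefully, likely by choosing $t$ outside an at most countable exceptional set or by exploiting that $B^\alpha$ has no atoms and monotone convergence controls the boundary term. A secondary technical point is the uniform control, in $n$ and in the competitor, of the growth of the integrand, for which Assumption \ref{assum:U} is exactly designed; one also needs the potential-function bound \eqref{eq:linear_growth} to keep all value functions within the linear-growth class so that no mass escapes in the limit.
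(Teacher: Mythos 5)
Your lower-bound direction ($\liminf_n u^n \ge \widetilde u$) is essentially the paper's argument: discretize an $\eps$-optimal $\alpha\in\Ac^0_t$ along $\pi_n$ by setting $\tau^{\alpha_n}_j:=\tau^\alpha_{s^n_j}$, use the c\`adl\`ag structure of $s\mapsto\tau^\alpha_s$ to get almost sure convergence of the objectives on the \emph{same} probability space, and use the polynomial growth bound of Assumption \ref{assum:U} for uniform integrability before applying Fatou. That half is fine.

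The gap is in the other direction. You propose to take near-optimal $\alpha_n\in\Ac^{n,0}_t$, interpolate them into elements of $\Ac^0_t$, extract a weak limit by tightness, and pass the value to the limit; you yourself flag the unresolved obstacle that the functional is not continuous in $\theta$ because of the indicator $\mathbf{1}_{\{\theta_k<t\}}$, and there is a second discontinuity you do not address: $\theta\mapsto\om_{\theta_s}$ need not converge when the limiting time change jumps at $s$, while the proposition is asserted for \emph{all} $(s,t,x)$, not outside an exceptional set. As written, this step does not go through. It is also unnecessary: the paper proves $u^n(s,t,x)\le\widetilde u(s,t,x)$ for every fixed $n$, with no limit passage and no compactness. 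Indeed, for $\alpha\in\Ac^{n,0}_t$ and $s\in(s^n_{j-1},s^n_j]$, the piecewise-constant rule $\hat\tau^\alpha_k:=\tau^\alpha_i$ for $k\in[s-s^n_{j-i+1},s-s^n_{j-i})$ belongs to $\Ac^0_t$, and the discrete sum $\sum_{i=1}^j\delta^n U(s^n_{j-i+1},x+B^\alpha_{\tau^\alpha_i})\mathbf{1}_{\{\tau^\alpha_i<t\}}$ is bounded above \emph{exactly} by $\int_0^s\partial_s U(s-k,x+B^\alpha_{\hat\tau^\alpha_k})\mathbf{1}_{\{\hat\tau^\alpha_k<t\}}\,dk$, using only $\delta^n U(s^n_i,y)=\int_{s^n_{i-1}}^{s^n_i}\partial_s U(r,y)\,dr$ and the monotonicity $U(s^n_j,\cdot)\le U(s,\cdot)$ for the first increment; this is an identity after reindexing, not a Riemann approximation carrying an error term. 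Taking expectations and suprema yields $u^n\le\widetilde u$ for each $n$, so the only limit needed in the whole proof is the one in your (correct) lower-bound step. If you insist on the weak-limit route you would in addition have to prove an upper-semicontinuity statement for the objective (exploiting $\partial_s U\le 0$) and deal with the jump-at-$s$ issue, neither of which your proposal supplies.
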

	\begin{proof}
	We start by rewriting the representation formula of $u^n(s^n_j,t,x)$ in Remark \ref{rem:Acn} as
		$$
			u^n(s^n_j, t,x) = \sup_{\alpha \in \Ac^{n,0}_t}
			\EE^{\PP^{\alpha}} \left[U(0,x+B^{\alpha}_{\tau^{\alpha}_j }) +\sum_{i=1}^j \delta^nU(s^n_{j-i+1},x+B^{\alpha}_{\tau^{\alpha}_i}) \mathbf{1}_{\{\tau^{\alpha}_i<t\}}\right].
		$$

	\noindent $\mathrm{(i)}$ First, for a fixed $n \in \N$, one can see $\Ac^{n,0}_t$ as a subset of $\Ac_t^0$ in the following sense.
	Given $\alpha \in \Ac^{n,0}_t$, and assume that $s \in (s^n_{j-1}, s^n_j]$.
	Notice that $U(s^n_j, y) - U(s, y) \le 0$ for all $y \in \R$,
	then it follows by direct computation that
	\begin{eqnarray*}
		&&
		\sum_{i=1}^j \delta^nU(s^n_{j-i+1},x+B^{\alpha}_{\tau^{\alpha}_i}) \mathbf{1}_{\{\tau^{\alpha}_i<t\}}\\
		&\le&
		\Big(U(s, x+ B^{\alpha}_{\tau^{\alpha}_1}) - U(s^n_{j-1}, x+ B^{\alpha}_{\tau^{\alpha}_1}) \Big) \mathbf{1}_{\{\tau^{\alpha}_1<t\}}
		+
		\sum_{i=2}^j \delta^nU(s^n_{j-i+1},x+B^{\alpha}_{\tau^{\alpha}_i}) \mathbf{1}_{\{\tau^{\alpha}_i<t\}}	
		\\
		&=& 
		\int_{s^n_{j-1}}^s  \partial_s U(k, x+ B^{\alpha}_{\tau^{\alpha}_1}) \mathbf{1}_{\{\tau^{\alpha}_1<t\}} dk
		+
		\sum_{i=2}^j \int_{s^n_{j-i}}^{s^n_{j-i+1}} \partial_s U(k, x+B^{\alpha}_{\tau^{\alpha}_i}) \mathbf{1}_{\{\tau^{\alpha}_i<t\}} dk \\
		&=&
		\int_0^{s-s^n_{j-1}} \partial_s U(s-k, x+ B^{\alpha}_{\tau^{\alpha}_1}) \mathbf{1}_{\{\tau^{\alpha}_1<t\}} dk
		+
		\sum_{i=2}^j \int_{s- s^n_{j-i+1}}^{s-s^n_{j-i}} \partial_s U(s-k, x+B^{\alpha}_{\tau^{\alpha}_i}) \mathbf{1}_{\{\tau^{\alpha}_i<t\}} dk
		\\
		&=&
		\int_0^{s} \partial_s U(s -k, x+B^{\alpha}_{\hat \tau^{\alpha}_k}) \mathbf{1}_{\{\hat \tau^{\alpha}_k<t\}} dk,
	\end{eqnarray*}
	where the last equality follows by setting $\hat \tau^{\alpha}_k := \tau^{\alpha}_i$ whenever $k \in [s-s^n_{j-i+1}, s- s^n_{j-i})$.
	By \eqref{eq:def_u_intro}, this implies that $u^n(s,t,x) =u^n(s^n_j, t, x) \le \widetilde{u}(s,t,x)$.

	\vspace{0.5em}

	\noindent $\mathrm{(ii)}$ Let $\alpha \in \Ac_t^0$, and define $\alpha_n \in \Ac^{n,0}_t$ by
	$$
		\tau^{\alpha_n}_j := \tau^{\alpha}_{s^n_j},
		~~\mbox{for}~j=1, \cdots,n.
	$$
	Let  $j_n$ be such that $s^n_{j_n}$ converges to $s$ as $n\rightarrow \infty$,
	then it follows that
	\begin{align*} 
		X_n ~&:=~ U(0,x+B^{\alpha}_{\tau^{\alpha_n}_{j_n}}) +\sum_{k=1}^{j_n} \delta^nU(s^n_{j_n-k+1},x+B^{\alpha}_{\tau^{\alpha_n}_k}) \mathbf{1}_{\{\tau^{\alpha_n}_k<t\}} \\
		&\underset{n\rightarrow \infty}{\longrightarrow}
		U(0,x+B^{\alpha}_{\tau^{\alpha}_s}) + \int_0^s \partial_s U(s-k,x+B^{\alpha}_{\tau^{\alpha}_k}) \mathbf{1}_{\{\tau^{\alpha}_k<t\}} dk ,
		~~\mbox{a.s.}
	\end{align*}
	Recall that by Assumption \ref{assum:U}, there exists some $C>0$ and $p>0$ such that
	$|U(0,x)| \le C+ |x|$ and $\sup_{s\in[0,1]} |\partial_s U(s,x)|\leq C(1+|x|^p)$ for any $x\in\R$,
	then $(X_n)_{n \ge 1}$ is in fact uniformly integrable. Hence for $\varepsilon>0$ such that $\alpha$ is $\varepsilon$-optimal in \eqref{eq:def_u_intro}, it follows from the previous remark and from Fatou's lemma that $\liminf_{n\rightarrow \infty} \EE[X_n] \geq \widetilde{u}(s,t,x)-\varepsilon$. Thus, 
	$
		\lim_{n \to \infty} u^n(s,t,x) \ge \widetilde{u}(s,t,x).
	$
	Therefore we have proved that ${\lim_{n \to \infty} u^n(s,t,x) = \widetilde{u}(s,t,x)}$.
\end{proof}

	\begin{lemma}\label{lem:u_BV}
		The function $\widetilde{u}(s,t,x)$ is non-increasing and Lipschitz in $s$ with a locally bounded Lipschitz constant $C(t,x)$,
		and is uniformly Lipschitz in $x$ and uniformly $\frac12$-H\"older in $t$.
	\end{lemma}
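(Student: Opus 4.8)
The plan is to establish each regularity property of $\widetilde u$ directly from the optimal stopping representation \eqref{eq:def_u_intro}, using the standard trick that a supremum of functions sharing a modulus of continuity inherits that modulus. Throughout, fix a stopping rule $\alpha$ and write $Y_k := x + B^\alpha_{\tau^\alpha_k}$ so that the payoff inside the supremum is $\Psi^\alpha(s,t,x) := U(0,Y_s) + \int_0^s \partial_s U(s-k, Y_k)\mathbf 1_{\{\tau^\alpha_k < t\}}\,dk$. For the $x$-regularity: since $y\mapsto U(0,y)$ and $y \mapsto \partial_s U(\cdot,y)$ are both $1$-Lipschitz (the potential function of a probability measure is $1$-Lipschitz, and $\partial_s U$ is a difference quotient limit of such functions, hence also $1$-Lipschitz in the space variable), the map $x \mapsto \Psi^\alpha(s,t,x)$ is Lipschitz with constant $1 + s \le 2$ uniformly in $\alpha$; taking the supremum over $\alpha$ preserves this, giving uniform Lipschitz continuity in $x$.

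For the $t$-regularity: changing $t$ to $t'$ only affects the indicator terms $\mathbf 1_{\{\tau^\alpha_k < t\}}$. For a fixed $\alpha$, I would instead compare $\widetilde u(s,t,x)$ and $\widetilde u(s,t',x)$ with $t < t'$ by a probabilistic argument: an $\varepsilon$-optimal rule for the larger horizon $t'$ can be truncated/modified into an admissible rule for the smaller horizon by stopping all $\tau^\alpha_k$ at time $t$, and the resulting change in the payoff is controlled by $\EE[\,|B^\alpha_{t'} - B^\alpha_t|\,] + (\text{growth of }\partial_s U)\cdot(\text{something of order }\sqrt{t'-t}\,)$, invoking the Burkholder–Davis–Gundy / reflection estimate $\EE|B_{t'}-B_t| \le C\sqrt{t'-t}$ together with the polynomial-growth bound on $\sup_s|\partial_s U|$ from Assumption \ref{assum:U}. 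This yields the $\tfrac12$-Hölder bound in $t$, uniformly in $(s,x)$ on bounded sets, and in fact uniformly since the linear-growth control \eqref{eq:linear_growth} already bounds $\widetilde u$ and the relevant increments.

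For the $s$-monotonicity and Lipschitz property in $s$: monotonicity is inherited from the finite-marginal approximations via Proposition \ref{prop:conv_un}, since each $\delta^n U \le 0$ forces $u^n(s^n_{j-1},\cdot) \ge u^n(s^n_j,\cdot)$ — alternatively, directly from \eqref{eq:def_u_intro}, enlarging $s$ to $s' > s$ one can restrict the competitor rules to those with $\tau^\alpha_k = \tau^\alpha_s$ for $k \in [s,s']$, and on that subclass the extra integral term $\int_s^{s'}\partial_s U(s'-k,Y_k)\mathbf 1_{\{\cdots\}}dk \le 0$ shows $\widetilde u(s',\cdot) \le \widetilde u(s,\cdot)$. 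For the Lipschitz bound in $s$: comparing $\Psi^\alpha(s,t,x)$ and $\Psi^\alpha(s',t,x)$, the difference of the $U(0,\cdot)$ terms is bounded by $\EE|B^\alpha_{\tau^\alpha_{s'}} - B^\alpha_{\tau^\alpha_s}|$, and the difference of the integral terms is bounded by $\int_0^{s'}|\partial_s U(s'-k,Y_k) - \partial_s U(s-k,Y_k)|dk + \int_s^{s'}|\partial_s U(s'-k,Y_k)|dk$; the second piece is $O(s'-s)$ by the polynomial growth of $\partial_s U$ combined with the moment bound $\EE[\sup_{r\le\tau^\alpha_1}|B^\alpha_r|^p] < \infty$ coming from \eqref{eq:linear_growth}, and both $\EE|B^\alpha_{\tau^\alpha_{s'}} - B^\alpha_{\tau^\alpha_s}|$ and the first integral piece are likewise controlled, yielding a local (in $(t,x)$) Lipschitz constant $C(t,x)$.

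The main obstacle I anticipate is the $s$-Lipschitz estimate: unlike $x$ and $t$, the parameter $s$ enters both through the stopping time $\tau^\alpha_s$ (whose dependence on $s$ is only known to be c\`adl\`ag and non-decreasing, not Lipschitz) \emph{and} through the integrand $\partial_s U(s-k,\cdot)$. The key realization making it work is that the term $\EE|B^\alpha_{\tau^\alpha_{s'}} - B^\alpha_{\tau^\alpha_s}|$ can be bounded using the embedding constraint $B^\alpha_{\tau^\alpha_s}\sim\mu_s$: one has $\EE|B^\alpha_{\tau^\alpha_{s'}}|^2 - \EE|B^\alpha_{\tau^\alpha_s}|^2 = \EE[\tau^\alpha_{s'}] - \EE[\tau^\alpha_s]$, hmm — but $\widetilde u$ is defined via the \emph{optimal stopping} supremum, not via an embedding, so the $Y_k$ need not be marginals of $\mu$; instead one must argue purely at the level of the optimal stopping problem, controlling $\EE|B^\alpha_{\tau^\alpha_{s'} \wedge t} - B^\alpha_{\tau^\alpha_s \wedge t}|$ by $\sqrt{\EE[\tau^\alpha_{s'}\wedge t] - \EE[\tau^\alpha_s\wedge t]} \le \sqrt t$ and absorbing this into $C(t,x)$ — precisely why the Lipschitz constant is only \emph{locally} bounded and genuinely depends on $(t,x)$. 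I would take care to route this through the approximation $u^n \to \widetilde u$ where the discrete structure makes the telescoping transparent, then pass to the limit.
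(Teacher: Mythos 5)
Your strategy of proving the regularity directly from the representation \eqref{eq:def_u_intro} has two genuine gaps. First, the uniform Lipschitz bound in $x$ rests on the claim that $y\mapsto\partial_s U(s,y)$ is $1$-Lipschitz; this is not available: Assumption \ref{assum:U} gives only continuity and polynomial growth of $\partial_s U$, and the difference-quotient argument produces the constant $2/h$ for $(U(s+h,\cdot)-U(s,\cdot))/h$, which blows up as $h\to0$ (formally $\partial_x\partial_s U=-2\,\partial_sF_s$, where $F_s$ is the c.d.f.\ of $\mu_s$, and this need not be bounded). So the payoff inside the supremum is not uniformly Lipschitz in $x$ by your route. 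Second, and more seriously, the Lipschitz estimate in $s$ is never actually obtained: for a fixed admissible rule, $\EE\big|B^{\alpha}_{\tau^{\alpha}_{s'}}-B^{\alpha}_{\tau^{\alpha}_{s}}\big|$ is not $O(|s'-s|)$ (the stopping times may jump), and your fallback bound $\sqrt{t}$ ``absorbed into $C(t,x)$'' only controls the oscillation of $\widetilde u$ in $s$, whereas the lemma asserts $|\widetilde u(s',t,x)-\widetilde u(s,t,x)|\le C(t,x)\,|s'-s|$. Your ``direct'' monotonicity argument also goes the wrong way: restricting the competitor class bounds the restricted supremum, not $\widetilde u(s',\cdot)$ itself.

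The paper's proof avoids all of this by establishing the estimates for $u^n$, uniformly in $n$, and passing to the limit via Proposition \ref{prop:conv_un}. Monotonicity in $s$, the uniform $1$-Lipschitz bound in $x$ and the $\tfrac12$-H\"older bound in $t$ are read off from the probabilistic representation \eqref{eq:RepresUn}, $u^n(s^n_j,t,x)=-\EE^{\mu_0}\big|B_{t\wedge\sigma^n_j}-x\big|$: monotonicity by Jensen applied to the martingale $j\mapsto B_{t\wedge\sigma^n_j}$, the $x$- and $t$-moduli from $\big||a-x|-|a-x'|\big|\le|x-x'|$ and $\EE\big|B_{t'\wedge\sigma}-B_{t\wedge\sigma}\big|\le\sqrt{t'-t}$. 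The Lipschitz bound in $s$ comes from the recursion \eqref{eq:OS_discrTime}: choosing $\theta=0$ yields $u^n(s^n_j,t,x)-u^n(s^n_{j-1},t,x)\ge\delta^nU(s^n_j,x)\ge -C(1+|x|^p)(s^n_j-s^n_{j-1})$ by Assumption \ref{assum:U}, which together with monotonicity gives the two-sided bound that survives the limit. Your closing suggestion to route the argument through $u^n$ points in the right direction, but the key obstacle inequality obtained from $\theta=0$ is missing, so as written the proposal does not prove the lemma.
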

	\begin{proof}
	
	First, using representation formula of $u^n$ in \eqref{eq:RepresUn} and noticing that $y \mapsto |y-x|$ is convex,
	we see that $s \mapsto u^n(s, t,x)$ is non-increasing.
	Further, using \eqref{eq:OS_discrTime}, it follows immediately that
	$$
		u^n(s^n_j, t,x) - u^n(s^n_{j-1}, t,x) ~\ge~ U(s^n_j, x) - U(s^n_{j-1}, x).
	$$
	Then under Assumption \ref{assum:U}, one has $0 \ge \partial_s u^n(s, t,x) \ge - C(1+|x|^p)$ for some constant $C > 0$ and $p > 0$ independent of $n$. 
	By the limit result $u^n \to \widetilde u$,
	it follows that $\widetilde{u}(s,t,x)$ is non-increasing and Lipschitz in $s$ with a locally bounded Lipschitz constant $C(t,x)$.
	
	Finally, using again the representation formula of $u^n$ in \eqref{eq:RepresUn}, it is easy to deduce that $u^n(k, t,x)$ is uniformly Lipschitz in $x$ and $1/2$-H\"older in $t$, uniformly in $n$.
	As limit of $u^n$, it follows that $\widetilde{u}$ is also uniformly Lipschitz in $x$ and $1/2$-H\"older in $t$.
\end{proof}

	We next show that the function $u$ defined by \eqref{eq:def_u} is also the limit of $u^n$,
	which leads to the equivalence of $u$ and $\widetilde u$, and then Theorem \ref{th:OSP}.$\mathrm{(i)}$ readily follows.
	\begin{proposition} \label{prop:equiv_u}
		For all $(s,t,x) \in [0,1] \x \R_+ \x \R$, one has $u(s,t,x) = \widetilde{u}(s,t,x)$.
	\end{proposition}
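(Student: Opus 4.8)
The goal is to identify the two a priori distinct objects: $u$, defined in \eqref{eq:def_u} as the potential of $B^{\alpha^*}_{t\wedge\tau^{\alpha^*}_s}$ for a fixed but arbitrary full-marginals Root limit $\alpha^*$, and $\widetilde u$, defined variationally in \eqref{eq:def_u_intro}. Since Proposition \ref{prop:conv_un} already shows $u^n(s,t,x)\to\widetilde u(s,t,x)$ pointwise, it suffices to show that $u^n(s,t,x)\to u(s,t,x)$ as well, at least along the subsequence $(n_k)$ produced by Proposition \ref{prop:KTT}. The plan is therefore to pass to the limit in the representation formula \eqref{eq:RepresUn}, namely $u^n(s^n_j,t,x)=-\EE^{\mu_0}|B_{t\wedge\sigma^n_j}-x|$, which in the notation of the stopping rule $\alpha^*_n$ reads $u^n(s,t,x)=-\EE^{\PP^{\alpha^*_n}}\big[\,|B^{\alpha^*_n}_{t\wedge\tau^{\alpha^*_n}_s}-x|\,\big]$ for $s\in(s^n_{j-1},s^n_j]$.

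First I would fix $(s,t,x)$ with $s\notin\T$ (the countable exceptional set of Proposition \ref{prop:KTT}), and observe that by that proposition the laws $\PP^{\alpha^*_{n_k}}\circ(B^{\alpha^*_{n_k}}_{\tau^{\alpha^*_{n_k}}_s})^{-1}$ converge weakly to $\PP^{\alpha^*}\circ(B^{\alpha^*}_{\tau^{\alpha^*}_s})^{-1}$. One then needs to upgrade this to convergence of the truncated/stopped variables $B^{\alpha^*_{n_k}}_{t\wedge\tau^{\alpha^*_{n_k}}_s}$; this should follow from the joint weak convergence of $(B^{\alpha^*_{n_k}}_\cdot,\tau^{\alpha^*_{n_k}}_\cdot)$ on $C(\R_+,\R)\times\A([0,1],\R_+)$ together with a continuous-mapping argument, being slightly careful that $\theta\mapsto B_{t\wedge\theta_s}$ is continuous at limit points where $\theta_s$ is not an atom of the time-change — the exceptional values being absorbed into $\T$ or handled by right-continuity. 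Given weak convergence of the stopped variables, the function $y\mapsto|y-x|$ is continuous, so I would only need uniform integrability to conclude $-\EE^{\PP^{\alpha^*_{n_k}}}|B^{\alpha^*_{n_k}}_{t\wedge\tau^{\alpha^*_{n_k}}_s}-x|\to -\EE^{\PP^{\alpha^*}}|B^{\alpha^*}_{t\wedge\tau^{\alpha^*}_s}-x|=u(s,t,x)$. Uniform integrability of $(B^{\alpha^*_{n_k}}_{t\wedge\tau^{\alpha^*_{n_k}}_s})_k$ is immediate: $(B^{\alpha^*_{n_k}}_{r})_{r\ge0}$ is a Brownian motion started from $\mu_0$, so $\EE[\,|B^{\alpha^*_{n_k}}_{t\wedge\tau^{\alpha^*_{n_k}}_s}|^2\,]\le \EE[\,|B^{\alpha^*_{n_k}}_t|^2\,]=t+\mathrm{Var}(\mu_0)$ by the optional sampling / Doob inequality applied to the bounded stopping time $t\wedge\tau^{\alpha^*_{n_k}}_s$, which gives an $L^2$-bound uniform in $k$.

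Combining the two limits, $u(s,t,x)=\lim_k u^{n_k}(s,t,x)=\widetilde u(s,t,x)$ for every $s\notin\T$. To remove the exceptional set, I would invoke the regularity already established: Lemma \ref{lem:u_BV} gives that $\widetilde u$ is continuous (indeed Lipschitz in $s$, Lipschitz in $x$, $\tfrac12$-Hölder in $t$), and $u$ is continuous in $s$ as well — either directly, since $s\mapsto B^{\alpha^*}_{t\wedge\tau^{\alpha^*}_s}$ is right-continuous and $\tau^{\alpha^*}$ has only countably many jumps a.s., giving right-continuity of $u(\cdot,t,x)$ plus a monotonicity/convexity argument for left limits, or simply because $u$ equals the continuous function $\widetilde u$ on the dense set $[0,1]\setminus\T$ and $u$ is monotone and bounded in $s$ hence has at most countably many discontinuities, forcing agreement everywhere. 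Since the left-hand side $u(s,t,x)$ is by construction independent of which Root limit $\alpha^*$ one picks once we know it equals $\widetilde u$, this simultaneously yields the pointwise identity $u=\widetilde u$ on all of $[0,1]\times\R_+\times\R$, completing the proof and, via Proposition \ref{prop:conv_un}, establishing Theorem \ref{th:OSP}.$\mathrm{(i)}$.

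The main obstacle I anticipate is the continuous-mapping step: the map $(\om_\cdot,\theta_\cdot)\mapsto \om_{t\wedge\theta_s}$ from $C(\R_+,\R)\times\A([0,1],\R_+)$ to $\R$ is not continuous everywhere (it fails precisely at time-changes $\theta$ for which $\theta_s$ is a point of increase boundary or $s$ is an atom location of $d\theta$), so I would need to check that the limit law $\PP^{\alpha^*}$ charges the discontinuity set with probability zero for all but countably many $s$, and enlarge $\T$ accordingly. This is the same type of argument already used in \cite{KTT} to pass from joint weak convergence to marginal convergence of $B^{\alpha^*_n}_{\tau^{\alpha^*_n}_s}$, so I expect it to go through by the same reasoning, but it is the one place where genuine care with the topology on $\A([0,1],\R_+)$ is required rather than routine estimation.
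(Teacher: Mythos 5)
Your proposal is correct and follows essentially the same route as the paper: pass to the limit in the representation \eqref{eq:RepresUn} along the subsequence from Proposition \ref{prop:KTT} (joint weak convergence plus uniform integrability) to obtain $u^{n_k}(s,t,x)\to u(s,t,x)$ outside a countable set of $s$, then combine with Proposition \ref{prop:conv_un} and the continuity of $\widetilde u$ from Lemma \ref{lem:u_BV}, removing the exceptional set via monotonicity/right-continuity of $u$ in $s$ exactly as the paper does via right-continuity of $s\mapsto\tau^{\alpha^*}_s$. One small repair: your uniform-integrability bound invokes $\mathrm{Var}(\mu_0)$, which is not assumed finite (the marginals are only required to have finite first moments), so the uniform integrability should instead come from dominating $\big|B^{\alpha^*_{n_k}}_{t\wedge\tau^{\alpha^*_{n_k}}_s}-x\big|$ by $\max_{0\le r\le t}\big|B^{\alpha^*_{n_k}}_r-x\big|$, whose law does not depend on $k$ and is integrable, which is precisely the paper's argument.
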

%	\begin{proposition}\label{prop:conv_un}
%		For all $(s,t,x)$, one has $u^n(s,t,x) \to u(s,t,x)$ as $n \to \infty$.
%	\end{proposition}
%\begin{proposition}
%	For all $s,t,x$, one has
%	\begin{align} \label{eq:property_sigma}
%		u(s,t,x) = -\EE^{\Pb^{\infty}} \big[ \big| B_{t\wedge T_s} -x \big| \big].
%	\end{align}
%\end{proposition}
\begin{proof}
	By Theorem 3.1 of \cite{COT} together with our extended definition in \eqref{eq:nRoot_conti}, $
		u^n(s, t, x) 
		=
		- \mathbb{E}^{\PP^{\alpha^*_{n}}} \Big[ \Big|B^{\alpha^*_{n}}_{t \wedge \tau^{\alpha^*_{n}}_s} - x \Big| \Big].
	$
	Moreover, by Proposition \ref{prop:KTT},
	there exists a countable set $\mathbb{T} \subset [0,1)$ and a subsequence $(n_k)_{k \ge 1}$ such that
	$\PP^{\alpha^*_{n_k}} \circ (B^{\alpha^*_{n_k}}_{\tau^{\alpha^*_{n_k}}_s})^{-1} 
		\to \PP^{\alpha^*} \circ (B^{\alpha^*}_{\tau^{\alpha^*}_s})^{-1} $.
	As $\mathbb{E}[\max_{0 \le r \le t} |B_r -x| \big] < \infty$ for a Brownian motion, it follows that $
		\mathbb{E}^{\PP^{\alpha^*_{n_k}}} \big[ \big|B^{\alpha^*_{n_k}}_{t \wedge \tau^{\alpha^*_{n_k}}_s} - x \big| \big] 
		~\longrightarrow~
		\mathbb{E}^{\PP^{\alpha^*}} \big[ \big|B^{\alpha^*}_{t \wedge \tau^{\alpha^*}_s} - x \big| \big]$, for all $s \in [0,1] \setminus \mathbb{T}$. Hence,
	$$
		u^n(s,t,x) \longrightarrow u(s,t,x),
		~~\mbox{for all}~
		s \in [0,1] \setminus \mathbb{T}.
	$$
	Further, by the right-continuity of $s \mapsto \tau^{\alpha^*}_s$,
	it is easy to deduce that $s \mapsto u(s,t,x) := - \mathbb{E}^{\PP^{\alpha^*}} |B^{\alpha^*}_{t \wedge \tau^{\alpha^*}_s} - x|$ is right-continuous.

	On the other hand, we know from Proposition \ref{prop:conv_un} that $u^n(s,t,x) \to \widetilde{u}(s,t,x)$,
	and from Lemma \ref{lem:u_BV} that $\widetilde{u}$ is a continuous function in all arguments,
	it follows that
	$u(s,t,x) = \widetilde{u}(s,t,x)$ holds for all $(s,t,x) \in [0,1] \x \R_+ \x \R$.
\end{proof}

\begin{remark}
	Formally, we can understand the above result in the following equivalent way.
	The $n$-marginals Root solution $(\sigma^n, B)$ converges weakly to a full marginal Root solution $(\sigma, B)$
	which then satisfies:
	$$
		u(s,t,x) = -\EE^{\mu_0} \big[ \big| B_{t\wedge \sigma_s} -x \big| \big].
	$$
\end{remark}

\subsection{PDE characterization of $u$}

%Set $\mathcal{L} = -\partial_t + \frac{1}{2}\Delta$ and consider the following variational inequality:
%\begin{equation}\label{eq:PDE}
%\begin{cases}
%&\min\left(-\mathcal{L}u, \partial_s(u-U)\right) = 0\\
%&u(s,0,x)=U(x,s),~u(0,t,x) = U(x,0).
%\end{cases}
%\end{equation}

%\begin{proposition}
%	$u$ is a continuous viscosity solution of \eqref{eq:PDE}. 
%\end{proposition}

	We now provide the
	
	\noindent  {\bf Proof of Theorem \ref{th:OSP}}.$\mathrm{(ii)}$.
	{\it Step 1.}
	We first notice that the continuity of $u(s,t,x)$ in $(s,t,x)$ follows directly by Lemma \ref{lem:u_BV} and Proposition \ref{prop:equiv_u}.
	
	\vspace{0.5em}
	
	\noindent {\it Step 2.}
	In a filtered probability space $(\Om, \Fc, \F, \PP)$ equipped with a Brownian motion $W$, we denote by $\Uc_t$
	the collection of all $\F$-predictable processes $\gamma = (\gamma_r)_{r \ge 0}$ such that $\int_0^1 \gamma_r^2 dr \le t$.
	Given a control process $\gamma$, we define two controlled processes $X^{\gamma}$ and $Y^{\gamma}$ by
	\begin{align*}
		X_s^{\gamma} := x + \int_0^s \gamma_r dW_r,
		~~~
		Y_s^{\gamma} := \int_0^s \gamma^2_r dr.
	\end{align*}
	By a time change argument, one can show that
	\begin{equation} \label{eq:represent_u_ctrl}
		u(s,t,x) 
		= 
		\sup_{\gamma \in \Uc_t} 
		\mathbb{E} \Big[ U(0, X_s^{\gamma}) + \int_0^s \partial_s U(s-k, X_k^{\gamma}) \mathbf{1}_{\{Y^{\gamma}_k < t\}} dk \Big].
	\end{equation}
	Indeed, given $\gamma \in \Uc_t$, one obtains a square integrable martingale $X^{\gamma}$ which has the representation $X^{\gamma}_s = W_{Y^{\gamma}_s}$, where $W$ is a Brownian motion and $Y^{\gamma}_s$ are all stopping times, and it induces a stopping rule in $\Ac_t^0$.
	By \eqref{eq:def_u_intro} and Proposition \ref{prop:equiv_u}, it follows that in \eqref{eq:represent_u_ctrl}, the left-hand side is larger than the right-hand side.
	On the other hand, given an increasing sequence of stopping times $(\tau_1, \cdots, \tau_n) \in \Tc^n_{0,t}$, 
	we define $\Gamma_s := \tau_j \vee \frac{s - s^n_j}{s^n_{j+1} - s} \wedge \tau_{j+1}$ for all $s \in [s^n_j, s^n_{j+1})$.
	Notice that $s \mapsto \Gamma_s$ is absolutely continuous and $\Gamma_{s^n_j} = \tau_j$ for all $j =1 ,\cdots, n$.
	Then one can construct a predictable process $\gamma$ such that 
	$$
		\PP_0 \circ \Big(\int_0^{\cdot} \gamma_r dB_r,  \int_0^{\cdot} \gamma_r^2 dt \Big)^{-1}
		=
		\PP_0 \circ \Big(B_{\Gamma_{\cdot}},  \Gamma_{\cdot} \Big)^{-1}.
	$$
	Using the definition of $u^n$ in \eqref{eq:OS_discrTime_iterated} and its convergence in Proposition \ref{prop:conv_un}, one obtains that in \eqref{eq:represent_u_ctrl}, the right-hand side is larger than the left-hand side.

	\vspace{0.5em}

	The above optimal control problem satisfies the dynamic programming principle (see e.g. \cite{ElKarouiTan}): for any family of stopping times $(\tau^{\gamma})_{\gamma \in \Uc_t}$ dominated by $s$, one has
	\begin{equation} \label{eq:DPP}
		u(s,t,x) 
		= 
		\sup_{\gamma \in \Uc_t} 
		\mathbb{E} 
		\Big[
			u \big(s- \tau^{\gamma}, t- Y^{\gamma}_{\tau^{\gamma}}, X^{\gamma}_{\tau^{\gamma}} \big)
			+
			\int_0^{\tau^{\gamma}} \partial_s U(s-k, X^{\gamma}_{\tau^{\gamma}}\big) \mathbf{1}_{\{Y^{\gamma}_k < t\}} dk
		\Big].
	\end{equation}
	
	\noindent {\it Step 3 (supersolution).} Let $z=(s,t,x)\in{\rm int}^p(\mathbf{Z})$ be fixed,
	and $\varphi\in C^2(\mathbf{Z})$ be such that $0 = (u - \varphi)(z) = \min_{z'\in \mathbf{Z}}(u-\varphi)$. By a slight abuse of notation, denote by $\partial_s U(z')$ the quantity $\partial_s U(s',x')$ for any $z' = (s',t',x')$. Then by \eqref{eq:DPP}, for any family of stopping times $(\tau^{\gamma})_{\gamma\in \Uc_t}$  dominated by $s$, one has, 
	$$
		\sup_{\gamma \in \Uc_t} \mathbb{E} 
		\Big[ 
			\int_0^{\tau^{\gamma}}  \big( -  \partial_s \varphi(Z_k) + \partial_s U(Z_k) \mathbf{1}_{\{Y^\gamma_k<t\}} \big) dk
			+
			\int_0^{\tau^{\gamma}}  \gamma_k^2 \big(- \partial_t \varphi + \frac12 \partial^2_{xx} \varphi)(Z_k) dk
		\Big] 
		\le 0,
	$$
	where $Z_k := (s-k, t - Y^{\gamma}_k, X^{\gamma}_k)$ and $X^\gamma_0= x$.
	Choosing $\gamma_\cdot \equiv 0$ and $\tau^{\gamma} \equiv h$ for $h$ small enough, we get
	$$
		\big(- \partial_s \varphi + \partial_s U \big) (z) \le 0.
	$$
	On the other hand, choosing $\gamma_\cdot \equiv \gamma_0$ for some constant $\gamma_0$ and $\tau^{\gamma} := \inf\{ k\ge 0~: |X^{\gamma}_k -x| + |Y^{\gamma}_k| \ge h\}$, then by letting $\gamma_0$ be large enough and $h$ be small enough, one can deduce that 
	$$
		\big(- \partial_t \varphi + \frac12 \partial^2_{xx} \varphi \big)(s,t,x) \le 0.
	$$

	\noindent {\it Step 4 (subsolution).} Assume that $u$ is not a viscosity sub-solution, then there exists $z=(s,t,x)\in\mbox{int}^p(\mathbf{Z})$ and $\varphi\in C^2(\mathbf{Z})$, such that $0 = (u-\varphi)(z) = \max_{z' \in \mathbf{Z}} (u-\varphi)(z')$, and
	$$
		 \min \big\{ \partial_t \varphi - \frac12 \partial^2_{xx} \varphi,~ \partial_s(\varphi-U) \big\} (s,t,x) > 0.
	$$
	By continuity of $U$ and $\varphi$, we may find $R>0$ such that 
	\begin{equation} \label{eq:super_sol_contrad}
		\min  \big\{ \partial_t \varphi - \frac12 \partial^2_{xx} \varphi,~ \partial_s(\varphi-U) \big\} \ge 0,
		~~\mbox{on}~~
		B_R(z), 
	\end{equation}
where $B_R(z)$ is the open ball with radius $R$ and center $z$.
	Let $\tau^{\gamma} := \inf\{k ~: Z^{\gamma}_k \notin B_R(z) \text{ or } Y^\gamma_k\geq t\} $, 
	and notice that $\max_{\partial B_R(s,t,x)} (u-\varphi) = - \eta < 0$, by the strict maximality property. Then it follows  from \eqref{eq:DPP} that
	\begin{eqnarray*}
		0 
		&=&
		\sup_{\gamma} \mathbb{E} \Big[ 
			u \big(s- \tau^{\gamma}, t- Y^{\gamma}_{\tau^{\gamma}}, X^{\gamma}_{\tau^{\gamma}} \big) - u(s,t,x)
			+
			\int_0^{\tau^{\gamma}} \partial_s U(s-k, X^{\gamma}_{\tau^{\gamma}}\big) \mathbf{1}_{\{Y^{\gamma}_k < t\}} dk 
		\Big] \\
		&\le&
		-\eta + \sup_{\gamma}  \mathbb{E}\Big[
			\int_0^{\tau^{\gamma}}  \Big( - \partial_s (\varphi - U \mathbf{1}_{\{Y^\gamma_k<t\}}) - (\partial_t \varphi - \frac12 \partial^2_{xx}\varphi) \gamma^2_k\Big)(Z_k) dk
		\Big]
		~\le~
		-\eta,
	\end{eqnarray*}
	where the last inequality follows by \eqref{eq:super_sol_contrad}. This is the required contradiction.
	\qed

\subsection{Comparison principle of the PDE (Theorems \ref{th:OSP}.$\mathrm{(iii)}$ and \ref{thm:KTTintro}.$\mathrm{(ii)}$)}

	Recall that the operator $F$ is defined in \eqref{eq:def_F} and we will study the PDE \eqref{eq:PDE}.
	For any $\eta \ge 0$, a lower semicontinuous function $w: \mathbf{Z} \to \R$ is called an $\eta$-strict viscosity supersolution of \eqref{eq:PDE} if 
		$w|_{\partial^p\mathbf{Z}} \ge \eta+ U(0, \cdot)$ and $F(D\varphi, D^2 \varphi)(z_0) \ge \eta$ for all $(z_0, \varphi) \in \mbox{int}^p(\mathbf{Z}) \x C^2(\mathbf{Z})$ satisfying $(w-\varphi)(z_0) = \min_{z \in\mathbf{Z}} (w - \varphi)(z)$.

\begin{proposition}[Comparison]
\label{prop:comparison}
	Let $v$ (resp. $w$) be an upper (resp. lower) semicontinuous viscosity subsolution (resp. supersolution) of the equation \eqref{eq:PDE} satisfying
	\begin{eqnarray*}
		v(z) \;\le\; C(1+t+|x|)
		~\mbox{and}~
		w(z)\;\ge\;  - C(1+t+|x|),
		~z\in\mathbf{Z},
		&~\mbox{for some constant}~&
		C > 0.
	\end{eqnarray*}
	Then $v\le w$ on $\mathbf{Z}$.
\end{proposition}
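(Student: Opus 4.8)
The plan is to argue by contradiction via the classical doubling-of-variables method, preceded by two reductions. \emph{First}, since the operator $F$ in \eqref{eq:def_F} has no zeroth-order term, a direct comparison argument would only produce $0\ge 0$ at the end; to create the necessary slack we replace $w$ by $w_\eta(s,t,x):=w(s,t,x)+\eta(1+s+t)$ for $\eta>0$. A test function $\varphi$ touching $w_\eta$ from below at $z_0\in\mathrm{int}^p(\mathbf{Z})$ yields, after subtracting the smooth function $\eta(1+s+t)$, a test function for $w$; since $\partial_s(1+s+t)=\partial_t(1+s+t)=1$ and $\partial^2_{xx}(1+s+t)=0$, one gets $F(D\varphi,D^2\varphi)(z_0)\ge\eta$, and on $\partial^p\mathbf{Z}$ one has $1+s+t\ge 1$, so $w_\eta|_{\partial^p\mathbf{Z}}\ge\eta+U(0,\cdot)$. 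Thus $w_\eta$ is an $\eta$-strict supersolution, it still satisfies $w_\eta\ge -C(1+t+|x|)$, and it suffices to prove $v\le w$ whenever $w$ is an $\eta$-strict supersolution for some $\eta>0$: applying this to $w_\eta$ and letting $\eta\downarrow 0$ gives the general claim. \emph{Second}, set $\phi(z):=1+t^2+x^2$ for $z=(s,t,x)$ and, for $\kappa>0$, $M_\kappa:=\sup_{\mathbf{Z}}(v-w-\kappa\phi)$; the bounds $v\le C(1+t+|x|)$ and $-w\le C(1+t+|x|)$ make $M_\kappa$ finite, and it is enough to show $M_\kappa\le 0$ for every $\kappa\in(0,\eta)$, since then $v\le w$ follows by letting $\kappa\downarrow 0$.

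So fix $\kappa\in(0,\eta)$, suppose for contradiction $M_\kappa>0$, and for $\varepsilon>0$ introduce the doubled function
\[
	\Phi_\varepsilon(z,z') := v(z)-w(z')-\tfrac{1}{2\varepsilon}|z-z'|^2-\kappa\,\phi(z),
	\qquad z=(s,t,x),\ z'=(s',t',x').
\]
The growth bounds force $\Phi_\varepsilon\to-\infty$ as $|z|+|z'|\to\infty$, so $\Phi_\varepsilon$ attains its maximum at some $(z_\varepsilon,z'_\varepsilon)$, and the standard penalization estimate gives $\tfrac{1}{2\varepsilon}|z_\varepsilon-z'_\varepsilon|^2\to 0$ and $\max\Phi_\varepsilon\to M_\kappa$, with $(z_\varepsilon,z'_\varepsilon)$ staying in a fixed compact set (depending on $\kappa$) and every cluster point $\bar z$ of $(z_\varepsilon)_\varepsilon$ maximizing $v-w-\kappa\phi$. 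Since $(v-w)(\bar z)\ge M_\kappa>0$ while $v\le U(0,\cdot)\le w$ on $\partial^p\mathbf{Z}$, we have $\bar z\in\mathrm{int}^p(\mathbf{Z})$, hence $z_\varepsilon,z'_\varepsilon\in\mathrm{int}^p(\mathbf{Z})$ for $\varepsilon$ small and the viscosity inequalities apply at these points. Viewing \eqref{eq:PDE} as a degenerate elliptic equation in the single variable $z=(s,t,x)$ (the only second-order term being $-\tfrac12\partial^2_{xx}$), the Crandall--Ishii lemma yields symmetric matrices $X,Y$ with $\big(\tfrac{z_\varepsilon-z'_\varepsilon}{\varepsilon}+\kappa D\phi(z_\varepsilon),X\big)\in\overline{J}^{2,+}v(z_\varepsilon)$, $\big(\tfrac{z_\varepsilon-z'_\varepsilon}{\varepsilon},Y\big)\in\overline{J}^{2,-}w(z'_\varepsilon)$ and $X-Y\le\kappa D^2\phi(z_\varepsilon)$, hence $X_{xx}-Y_{xx}\le 2\kappa$. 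Writing $a:=\tfrac{t_\varepsilon-t'_\varepsilon}{\varepsilon}$, $b:=\tfrac{s_\varepsilon-s'_\varepsilon}{\varepsilon}$ and noting $D\phi(z_\varepsilon)$ has vanishing $s$-component and $t$-component $2t_\varepsilon$, the jet for $v$ has $t$-slot $a+2\kappa t_\varepsilon$ and $s$-slot $b$, and the jet for $w$ has $t$-slot $a$ and $s$-slot $b$.

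The subsolution inequality for $v$ now reads $\min\{\,a+2\kappa t_\varepsilon-\tfrac12 X_{xx},\ b-\partial_s U(s_\varepsilon,x_\varepsilon)\,\}\le 0$, and the $\eta$-strict supersolution inequality for $w$ gives both $a-\tfrac12 Y_{xx}\ge\eta$ and $b-\partial_s U(s'_\varepsilon,x'_\varepsilon)\ge\eta$. If the first term realizes the minimum, then $a-\tfrac12 X_{xx}\le -2\kappa t_\varepsilon\le 0$, whence $\tfrac12(X_{xx}-Y_{xx})=(a-\tfrac12 Y_{xx})-(a-\tfrac12 X_{xx})\ge\eta$, contradicting $X_{xx}-Y_{xx}\le 2\kappa<2\eta$. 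If instead the second term realizes the minimum, then $b\le\partial_s U(s_\varepsilon,x_\varepsilon)$ and $b\ge\eta+\partial_s U(s'_\varepsilon,x'_\varepsilon)$, so $\eta\le\partial_s U(s_\varepsilon,x_\varepsilon)-\partial_s U(s'_\varepsilon,x'_\varepsilon)$; but $(s_\varepsilon,x_\varepsilon)$ and $(s'_\varepsilon,x'_\varepsilon)$ lie in a fixed compact set and $|z_\varepsilon-z'_\varepsilon|\to 0$, so by the continuity of $\partial_s U$ granted by Assumption \ref{assum:U} the right-hand side tends to $0$ as $\varepsilon\to 0$, again a contradiction. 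Hence $M_\kappa\le 0$ and the proof is complete. I expect the main obstacle to be making these ingredients compatible: the absence of a zeroth-order term forces the passage to a strict supersolution, while the unboundedness of $\mathbf{Z}$ in both $t$ and $x$ forces a superlinear penalty $\kappa\phi$, and one must check that $\kappa$ can be taken small enough (namely $\kappa<\eta$) that the contribution $\kappa D^2\phi$ entering the matrix inequality does not consume the slack $\eta$; by contrast the degeneracy of $F$ is mild, the first-order $s$-direction being controlled purely through the continuity of $\partial_s U$.
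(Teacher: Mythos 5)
Your proof is correct, and it follows the same skeleton as the paper's (reduce to a strict supersolution, double variables with a penalization at infinity, apply the Crandall--Ishii lemma, and derive a contradiction from the continuity of $\partial_s U$ in Assumption \ref{assum:U}), but two implementation choices differ in a way worth noting. For the strictification, the paper forms the convex combination $w^\mu=(1-\mu)w+\mu w^1$ with the explicit $\eta$-strict supersolution $w^1=U(0,x)+\eta(1+s+t)$ and invokes the lemma of Barles and Jakobsen \cite{barles2002convergence} (together with the concavity of $U(0,\cdot)$ and $\partial_s U\le 0$) to see that $w^\mu$ is $\mu\eta$-strict; you instead add the smooth function $\eta(1+s+t)$ directly to $w$, which is immediately seen to produce an $\eta$-strict supersolution by translating test functions, because the perturbation is $C^2$ and the operator \eqref{eq:def_F} is a minimum of linear operators --- a genuinely simpler reduction that avoids the convexity lemma. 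For the doubling, the paper penalizes symmetrically by $\varepsilon(\varphi(z)+\varphi(z'))$, sends the coupling parameter $\alpha_n\to\infty$ for fixed $\varepsilon$, and obtains the contradiction $0\le-\eta+\varepsilon$ from a combined min--min estimate, whereas you penalize only in $z$ with a fixed $\kappa\in(0,\eta)$ and split into the two branches of the minimum, killing the heat branch via $X_{xx}-Y_{xx}\le 2\kappa<2\eta$ and the $s$-branch via uniform continuity of $\partial_s U$ on the compact set containing the maximizers; both orderings of limits work. The only small point to tidy is that Theorem 3.2 of \cite{crandall1992user} gives $X-Y\le\kappa D^2\phi(z_\varepsilon)$ only up to an additional error of order $\delta$ coming from the free parameter in the theorem of sums (alternatively, apply it to $v-\kappa\phi$ with the pure quadratic coupling); since this error can be made arbitrarily small it does not consume the slack $2\eta-2\kappa$, so your argument stands as written.
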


\proof We proceed in three steps.

	\vspace{0.5em}

\noindent $\mathrm{(i)}$ In this step, we prove the result under the assumption that the comparison result holds true if the supersolution is $\eta-$strict for some $\eta>0$. First, direct verification reveals that the function:
 \begin{eqnarray*}
 w^1(s,t,x)
 :=
 U(0,x)+\eta (1+s+t),
 &\quad (s,t,x)\in\mathbf{Z},&
 \end{eqnarray*}
is an $\eta-$strict supersolution. For all $\mu\in(0,1)$, we claim that the function $w^\mu:=(1-\mu)w+\mu w^1$ is a $\mu\eta-$strict viscosity supersolution. Indeed, this follows from the proof of Lemma A.3 (p.52) of Barles and Jakobsen \cite{barles2002convergence}, which shows that $w^\mu$ is a viscosity supersolution of both linear equations:
 \begin{eqnarray*}
 \partial_t w^\mu - \frac12\partial^2_{xx}w^\mu \ge \mu\eta
 &\quad \mbox{and} \quad &
 \partial_s w^\mu - \partial_sU \ge \mu\eta.
  \end{eqnarray*} 
	Assume that the comparison principle holds true if the supersolution is strict,
	then it follows that $v\leq w^\mu$ on $\mathbf{Z}$.
	Let $\mu\searrow 0$, we obtain $v\leq w$ on $\mathbf{Z}$.

	\vspace{0.5em}

\noindent $\mathrm{(ii)}$ In view of the previous step, we may assume without loss of generality that $w$ is an $\eta-$strict supersolution. In order to prove the comparison result in this setting, we assume to the contrary that 
  \begin{eqnarray} \label{contra-z}
 \delta \;:=\; (v-w)(\hat z) \;>\; 0,
 &\quad \mbox{for some}~&
 \hat z\in\mathbf{Z},
 \end{eqnarray}
and we work toward a contradiction. Following the standard doubling variables technique, we introduce for arbitrary $\alpha,\eps>0$:
 \begin{eqnarray*}
 \Phi^{\alpha,\eps}(z,z')
 :=
 \frac{\alpha}{2}\big|z-z'\big|^2
 +\eps\big(\varphi(z)+\varphi(z')\big),
 ~~\mbox{with}~~
 \varphi(z)
 :=
 \frac{1}{2}\big(t^2+x^2\big),
 &~z,z'\in\mathbf{Z},&
 \end{eqnarray*}
and the corresponding maximum
  \begin{eqnarray} \label{Malphaeps}
 M^{\alpha,\eps}
 :=
 \sup_{(z,z')\in\mathbf{Z}\times\mathbf{Z}} 
 \big\{v(z)-w(z')-\Phi^{\alpha,\eps}(z,z')\big\}
 &\ge&
 \delta-2\eps\varphi(\hat z)
 \;>\;
 0,
 \end{eqnarray}
by \eqref{contra-z}, for sufficiently small $\eps>0$. 
	By the bounds on $v$ and $w$, it follows that the above supremum may be confined to a compact subset of $\mathbf{Z}\times\mathbf{Z}$. Then the upper semicontinuity of the objective function implies the existence of a minimizer $(z^{\alpha,\eps},{z'}^{\alpha,\eps})\in\mathbf{Z}\times\mathbf{Z}$, i.e.
 \begin{eqnarray*}
 M^{\alpha,\eps}
 &=&
 v\big(z^{\alpha,\eps}\big)-w\big({z'}^{\alpha,\eps}\big)
 -\frac{\alpha}{2}\big|z^{\alpha,\eps}-{z'}^{\alpha,\eps}\big|^2
 -\eps\big(\varphi(z^{\alpha,\eps})+\varphi({z'}^{\alpha,\eps})\big),
 \end{eqnarray*}
and there exists a converging subsequence $\big(z_n^\eps,{z'}_n^\eps\big):=\big(z^{\alpha_n,\eps},{z'}^{\alpha_n,\eps}\big)\longrightarrow (z^\eps, {z'}^\eps)\in\mathbf{Z}\times\mathbf{Z}$, for some $(\alpha_n)_n$ converging to $\infty$. Moreover, denoting by $z^*$ any minimizer of $v-w-2\eps\varphi$, we obtain from the inequality $(v-w-2\eps\varphi)(z^*)\le M^{\alpha_n,\eps}$ that
 \begin{eqnarray*}
 \ell
 &:=&
 \limsup_{n\to\infty} \frac{\alpha_n}{2}\big| z_n^\eps - {z'}_n^\eps \big|^2 \\
 &\le&
 \limsup_{n\to\infty} 
 v(z_n^\eps)-w({z'}_n^\eps) 
 - \eps\big(\varphi(z_n^\eps)+\varphi({z'}_n^\eps) \big)
 - (v-w-2\eps\varphi)(z^*)
 \\
 &\le&
 v(z^\eps)-w({z'}^\eps)
 - \eps\big(\varphi(z^\eps)+\varphi({z'}^\eps) \big)
 - (v-w-2\eps\varphi)(z^*)
 \;<\; \infty.
 \end{eqnarray*}
Then $z^\eps={z'}^\eps$, and $0\le\ell\le (v-w-2\eps\varphi)(z^\eps)- (v-w-2\eps\varphi)(z^*)\le 0$ by the definition of $z^*$. Consequently: 
  \begin{equation} \label{step4}
	\begin{split}
 &z^\eps \;=\; {z'}^\eps,~
 \alpha_n\big| z_n^\eps - {z'}_n^\eps \big|^2 \longrightarrow 0, ~\mbox{and}~~\\
 &M^{\alpha_n,\eps}\longrightarrow\sup_\mathbf{Z}\left\{v-w-2\eps\varphi\right\},
 ~\mbox{as}~~n\to\infty.
	\end{split}
 \end{equation}
Finally, as $v$ is a subsolution and $w$ a supersolution, we see that if $z^\eps$ lies in $\partial^p \mathbf{Z}$, we would have $\limsup_{n\to\infty} M^{\alpha_n,\eps}\le - \eta -2\eps\varphi\big(z^\eps\big) < 0$, which is in contradiction with the positive lower bound in \eqref{Malphaeps}. 
	Consequently $z^\eps$ is in the parabolic interior $\mbox{\rm int}^p (\mathbf{Z})$ of $\mathbf{Z}$, 
	and therefore both $z^\eps_n$ and ${z'}^\eps_n$ are in $\mbox{\rm int}^p (\mathbf{Z})$  for sufficiently large $n$.
 
 	\vspace{0.5em}

	\noindent $\mathrm{(iii)}$ 
	We now use the viscosity properties of $v$ and $w$ at the interior points $z^\eps_n$ and ${z'}^\eps_n$, for large $n$. 
	By the Crandall-Ishii Lemma, see e.g. Crandall, Ishii and Lions \cite[Theorem 3.2]{crandall1992user}, we may find for each such $n$ two pairs $(p^\eps_n,A^\eps_n)$ and $(q^\eps_n,{B}^\eps_n)$ in $\R^3\times\mathbb{S}_3$, such that 
	$$
		\big(p^\eps_n+ \eps D \varphi(z^{\eps}_n), A^\eps_n+ \eps D^2 \varphi(z^{\eps}_n)\big)
		~\in~  \overline{J}v(z^{\eps}_n),
	$$
	$$
		\big(q^\eps_n - \eps  D \varphi({z'}^{\eps}_n),B^\eps_n- \eps D^2 \varphi({z'}^{\eps}_n)\big)
		~\in~  \underline{J}w({z'}^{\eps}_n),
	$$
	$$
		p^\eps_n=q^\eps_n=\alpha_n(z^\eps_n-{z'}^\eps_n)
		~~\mbox{and}~~
		A^\eps_n\le B^\eps_n,
  	$$
	where $\overline J$ and $\underline J$ denote the second order super and subjets, see \cite{crandall1992user}.
 Then, it follows from the subsolution property of $v$ and the $\eta-$strict supersolution of $w$ that
\begin{eqnarray*}
&&
	\min\Big\{ \alpha_n(t^\eps_n-{t'}^\eps_n) + \eps t^\eps_n
                  -\frac12 (A^\eps_{3,3,n}+\eps) ,
                  \alpha_n(s^\eps_n-{s'}^\eps_n) - \partial_s U(s^\eps_n,x^\eps_n)
	\Big\} 
\\
&&
	\!\!\le 0 \le
 -\eta 
 + \min\Big\{ \alpha_n(t^\eps_n-{t'}^\eps_n) - \eps {t'}^\eps_n
                  -\frac12 (B^\eps_{3,3,n}-\eps) ,
                  \alpha_n(s^\eps_n-{s'}^\eps_n) - \partial_s U({s'}^\eps_n,{x'}^\eps_n)
       \Big\}  
 \\
&&
 \hspace{5mm}\le
 -\eta 
 + \min\Big\{ \alpha_n(t^\eps_n-{t'}^\eps_n) - \eps {t'}^\eps_n
                  -\frac12 (A^\eps_{3,3,n}-\eps) ,
                  \alpha_n(s^\eps_n-{s'}^\eps_n) - \partial_s U({s'}^\eps_n,{x'}^\eps_n)
       \Big\},
 \end{eqnarray*}
by the inequality $A^\eps_n\le B^\eps_n$. This implies that
 \begin{eqnarray*}
 0
 &\le&
 -\eta
 -\eps(t^\eps_n+{t'}^\eps_n) 
 +\eps
 +\big| \partial_s U({s}^\eps_n,{x}^\eps_n) - \partial_s U({s'}^\eps_n,{x'}^\eps_n) \big|
 \\
 &\le&
 \;\;
 -\eta
 +\eps
 +\big| \partial_s U({s}^\eps_n,{x}^\eps_n) - \partial_s U({s'}^\eps_n,{x'}^\eps_n) \big|
 \;\longrightarrow\;
 -\eta
 +\eps,
 ~~\mbox{as}~n\to\infty,
 \end{eqnarray*}
which provides the required contradiction for sufficiently small $\eps>0$. 
\qed

	\begin{remark}
		To conclude the proofs of Theorem \ref{th:OSP} $\mathrm{(iii)}$ as well as Theorem  \ref{thm:KTTintro} $\mathrm{(ii)}$, 
		we notice that 
		both potential functions $U$ and $U_{\mathbf{N}(0,1)}$  have linear growth in $x$, then the linear growth requirements of the comparison result in Proposition \ref{prop:comparison} follow from \eqref{eq:linear_growth},
		and uniqueness for the PDE \eqref{eq:PDE} in part $\mathrm{(iii)}$ of Theorem \ref{th:OSP} follows immediately.
		Further, this implies also the uniqueness of the potential functions of $B^{\alpha^*}_{\tau^{\alpha^*}_s \wedge t}$ for all $s \in [0,1]$ and $t \ge 0$,
		and hence the uniqueness of law of $B^{\alpha^*}_{\tau^{\alpha^*}_s \wedge t}$
		in part $\mathrm{(ii)}$ of Theorem \ref{thm:KTTintro}.	
	\end{remark}

\section{Further discussion and examples}
\label{sec:discussion}

\subsection{On Assumption \ref{assum:U}}
\label{subsec:Ass_U}

	Our main result (Theorem \ref{th:OSP}) relies crucially on the growth condition for $x \mapsto \sup_{s \in [0,1]} |\partial_s U(s,x)|$ in Assumption \ref{assum:U}.
	Let us provide here some examples and counter-examples  on this condition.
	
	\begin{example} \label{exam:assumU1}
		Let $X$ be a random variable in a probability space $(\Om, \Fc, \mathbb{P})$ such that $\mathbb{E}[|X|] < \infty$, $\mathbb{E}[X] = 0$, and $\mathbb{P}[X=x] = 0$ for all $x \in \R$.
		For some $s_0 \ge 0$, we define a family of marginals $(\mu_s)_{s \in [0,1]}$ by 
		$\mu_s  := \mathbb{P} \circ ( (s+s_0) X)^{-1}$.
		Then each marginal $\mu_s$ is centred and has finite first moment, and $s \mapsto \mu_s $ is increasing in convex order (see e.g. \cite[Chapter 1]{HPRY}). 
		In this case, one has
		\begin{align*}
			U(s, x) 
			:=
			-\mathbb{E} \big[ \big| (s+s_0)X - x \big| \big] 
			&=
			 -(s+s_0) \mathbb{E} \Big[ \Big| X - \frac{x}{s+s_0} \Big| \Big] \\
			 &=
			 -(s+s_0) U \Big(1-s_0, \frac{x}{s+s_0} \Big).
		\end{align*}
		Further, by direct computation,
		$$
			\partial_s U(s,x) 
			= 
			-U \Big(1-s_0, \frac{x}{s+s_0} \Big) +  \partial_x U \Big(1-s_0, \frac{x}{s+s_0} \Big) \frac{x}{s+ s_0},
		$$
		where $\partial_x U$ is bounded continuous as the distribution function $x \mapsto \mathbb{P}[X \le x]$ is bounded continuous.
		Assume in addition that $s_0>0$,
		then $\sup_{0 \le s \le 1} |\partial_s U(s,x)|$ has linear growth in $x$.
		In particular, Assumption \ref{assum:U} holds true.
	\end{example}
	
	If we set $s_0 = 0$ in Example \ref{exam:assumU1}, then it is clear that $\lim_{s \searrow 0} |\partial_s U(s,x)| = \infty$, i.e. there is a singularity/discontinuity for $\partial_s U(s,x)$ at $s=0$, and thus Assumption \ref{assum:U} fails.
	In the following, we also provide an example, where $\partial_s U$ is continuous on $[0,1] \x \R$,
	but $x \mapsto \sup_{0 \le s \le 1} |\partial_s U(s,x)|$ may have any growth.
	Before giving the other example, let us introduce a (variated potential) function $V:[0,1] \x \R \to \R$ for $(\mu_s)_{0 \le s \le 1}$ by
	\begin{equation} \label{eq:call_price}
		V(s, x) := \int_{\R} (y-x)_+ \mu_s(dy).
	\end{equation}
	When $\mu_s$ is centred, one has
	$$
		2 V(s,x)  = -U(s,x) -x
		~~\mbox{so that}~~
		\partial_s U(s,x) = -2 \partial_s V(s,x).
	$$
	Therefore, it is equivalent to consider the growth of $x \mapsto \sup_{s \in [0,1]}|\partial_s V(s,x)|$ and that of
	$x \mapsto \sup_{s \in [0,1]}|\partial_s U(s,x)|$.
	Further, Equality \eqref{eq:call_price} allows to define a family of marginals $(\mu_s)_{s \in [0,1]}$, increasing in convex order, from $V:[0,1] \x \R \to \R$ whenever it satisfies that, for all $s \in [0,1]$,
	$$
		x \mapsto V(s,x) ~\mbox{is non-increasing and convex, and}
		~\lim_{x \to \infty}\! V(s,x) = \lim_{x \to -\infty}\! V(s,x) + x = 0,
	$$
	and for all $x \in \R$,
	$$
		s \mapsto V(s,x) ~\mbox{is non-decreasing}. 
	$$
	We refer to \cite[Proposition 2.1]{HirschRoynette} for such a characterisation.

	\begin{example}
		Let $V(1,x)$ be the (variated) potential function of the standard normal distribution $\mathbf{N}(0,1)$, 
		defined by
		$$
			V(1,x) := \int_{\R} (y-x)_+ \frac{1}{\sqrt{2 \pi}} e^{-y^2/2} dy.
		$$
		It is clear that $V(1,x)$ is strictly positive, decreasing and convex on $\R$.
		Let $(t_j)_{j=0}^{\infty}$ be an increasing sequence of real numbers such that $t_0 = 0$ and $\lim_{j\to \infty} t_j = 1$. We then define a sequence of points $(x_j)_{ j \ge 1}$ and functions $x \mapsto V(t_j, x)$ by the following induction argument:
		Let $x_0 = 0$; given $x_j$, one defines
		$$
			x_{j+1} ~:=~ x_j - \frac{V(1,x_j)}{\partial_x V(1,x_j)},
		$$
		and
		$$
			V(t_j, x) 
			~:=~ 
			V(1,x) \mathbf{1}_{\{x \le x_j\}}
			+
			\big(V(1, x_j) + \partial_xV(1,x_j) (x - x_j) \big)_+ \mathbf{1}_{\{x > x_j\}}.
		$$
		Then for each $j \ge 0$, $x \mapsto V(t_j, x)$ is the (variated) potential function of some marginal distribution $\mu_{t_j}$ with the equality in \eqref{eq:call_price},
		and 
		$$
			j \mapsto \mu_{t_j}
			~\mbox{is increasing in convex order and}~
			~\mu_{t_j} \to \mu_1 := \mathbf{N}(0,1),
		$$
		as $V(t_j, x) \to V(1,x)$ when $j \to \infty$.
		Moreover, the two functions $V(t_j, x)$ and $V(t_{j+1},x)$ coincide on $\R \setminus [x_j, x_{j+2}]$.

		\vspace{0.5em}
		
		For any growth function $G: \R \to \R_+$, 
		we consider a sequence of smooth increasing functions $\phi_j : [t_j, t_{j+1}] \to [0,1]$, for $j \ge 0$, such that
		\begin{equation} \label{eq:growth_condition}
			\phi_j(t_j) = \phi_j ' (t_j) = \phi_j '(t_{j+1}) = 0,
			~
			 \phi_j(t_{j+1}) = 1,
			~\mbox{and}~
			\phi_j' \big( (t_j+t_{j+1})/2 \big) \ge \frac{G(x_{j+1})}{V(1, x_{j+1})}.
		\end{equation}
		We then define $x \mapsto V(s,x)$ (or equivalently $\mu_s$) for all $s \in (t_j, t_{j+1})$ and all $j \ge 0$ by
		$$
			V(s,x) ~:=~ (1-\phi_j(s)) V(t_j, x) + \phi_j(s) V(t_{j+1}, x).
		$$
		Then it is direct to check that the family of marginal distributions $(\mu_s)_{s \in [0,1]}$ is increasing in convex order, and $\partial_s V(s,x)$ exists and is continuous on $[0,1] \x \R$,
		and by \eqref{eq:growth_condition},
		$$
			\Big| \max_{s \in [0,1]} \partial_s V(s,x_j) \Big| \ge G(x_j),~\mbox{for all}~j \in \N.
		$$

		Finally, we notice that in the above example, the derivative $\partial^2_{xx} V(s,x)$ may not exists,
		but we can always consider the convolution of $x \mapsto V(s,x)$ with some smooth function in compact support, 
		so that $\partial^2_{xx} V(s,x)$ exists for all $x \in \R$, and it provides the density function of $\mu_s$.
	\end{example}

\subsection{Towards a hitting time characterization for the Root embedding}

	In the case with finitely many marginals  $(\mu_{s^n_j})_{j=1, \cdots, n}$,
	the Root stopping times $\{\sigma^n_j\}_{j=1,\cdots, n}$ 
	are defined successively as hitting times of barriers, that is
	\begin{equation} \label{eq:def_nRoot}
		\sigma^n_j := \inf \big\{ t \ge \sigma^n_{j-1} ~: (t, B_t) \in \Rc^n_j \big\},
	\end{equation}
	with barriers $\Rc^n_j$ defined by
	\begin{equation}  \label{eq:def_nRoot_barrier}
		\mathcal{R}_j^n = \{(t,x): ~\delta^n u_n(s^n_j,t,x) = \delta^n U(s^n_j,x)\} .
	\end{equation}
	In the full marginals case,
	it is natural to seek to characterize the limit Root solution $(\tRoot^{\infty}_s)_{s \in [0,1]}$ as hitting times,
	and a natural guess would be
	\begin{equation} \label{eq:faux_def_Root_full_marg}
		\tRoot^{\infty}_s ~=~ \inf\left\{ t\geq \tRoot^{\infty}_{s-}:~ (t,W_t) \in \mathcal{R}_s \right\},~\mbox{for all}~s \in [0,1],
	\end{equation}
	with a family of barriers $\mathcal{R} = \{\mathcal{R}_s\}_{s\in[0,1]}$.
	Moreover, a natural candidate for the family of the barriers would be,
	when the partial derivative $\partial_s u(s,t,x)$ is well defined,
	\begin{align*}
		\mathcal{R}_s ~:=~ \big\{(t,x):~ \partial_s u(s,t,x) = \partial_s U(s,x) \big\}.
	\end{align*}

	Our first observation is that \eqref{eq:faux_def_Root_full_marg} cannot be used
	as a definition for an uncountable family of stopping times. 
	Indeed, for the case with finite marginals as in \eqref{eq:def_nRoot}, the induction argument defines well the family $(\sigma^n_j)_{j = 1, \cdots, n}$.
	However, when the index set cannot be written as a countable and ordered sequence of  time instants,
	one cannot cover all time instants in the set by an induction argument as in \eqref{eq:faux_def_Root_full_marg}.

	\vspace{0.5em}
	
	Nevertheless, one can always define the family $(\tRoot^{\infty}_s)_{s \in [0,1]}$ 
	as limit of $\{\sigma^n_j\}_{j=1,\cdots, n}$ in terms of the convergence of stochastic (non-decreasing) processes (see Section \ref{subsec:cvg}),
	and then expect to use \eqref{eq:faux_def_Root_full_marg} as a characterization of $(\tRoot^{\infty}_s)_{s \in [0,1]}$.
	As $(\tRoot^{\infty}_s)_{s \in [0,1]}$ is obtained as weak limit of non-decreasing processes (w.r.t. the L\'evy metric)
	it is natural to study the convergence of the set valued process 
	$(\Rc^n_s)_{s \in [0,1]}$ defined by $\Rc^n_s := \Rc^n_j$ for $s \in [s^n_j, s^n_{j+1})$ (with a slight abuse of notation).
	The argument of the convergence of the barriers has been crucially used in the seminal paper \cite{Root} by considering an appropriate topology on the space of subsets of $[0, \infty] \x \R$ to construct a one-marginal Root solution.
	In our context, when the barriers  $(\Rc^n_s)_{s \in [0,1]}$ are ordered, the convergence of the set-valued process
	$(\Rc^n_s)_{s \in [0,1]}$  would be easy or even trivial (see some trivial examples in Example \ref{exam:RootOrdered}).
	When the barriers  $(\Rc^n_s)_{s \in [0,1]}$ are not ordered,
	one generally needs some regularity of the path $s \mapsto \Rc^n_s$ as required in the Arzel\`a-Ascoli theorem.
	Nevertheless, it seems unclear to us to obtain estimations on the path regularity of barriers from its definition in \eqref{eq:def_nRoot_barrier}.
	
	\begin{example}[Ordered Root solutions] \label{exam:RootOrdered}
		Let us consider a standard Brownian motion $W$ (with $W_0 = 0$\footnote{One can always consider the shifted Brownian motion $(B_t = W_{\tau_0 + t})_{t \ge 0}$ for a stopping time such that $W_{\tau_0} \sim \mu_0$ to go back to the context of Definition \ref{def:embedding}.})
		for the following two examples.
	
		\vspace{0.5em}
	
		\noindent $\mathrm{(i)}$ 
		Let $t_0 > 0$ and $\mu_s = \mathbf{N}(0, t_0 +s)$ be the Gaussian distribution with variance $t_0+s$ for all $s \in [0,1]$,
		then $(\mu_s)_{s \in [0,1]}$ is increasing in convex order.
		Moreover, for every $s \in [0,1]$, 
		the Root solution with one marginal $\mu_s$ is given by
		$$
			\sigma^1_s := \inf \big\{ t \ge 0 ~: (t, W_t) \in \Rc^1_s \big\}
			~\mbox{with}~
			\Rc^1_s := [t_0+s, \infty] \x \R.
		$$
		The barriers $(\Rc^1_s)_{s \in [0,1]}$ are ordered in the sense that
		$$
			\Rc^1_{s_1} \supseteq \Rc^1_{s_2},
			~~\mbox{whenever}~s_1 \le s_2.
		$$
		Consequently, one has $\sigma^1_{s_1} \le \sigma^1_{s_2}$ whenever $s_1 \le s_2$;
		and therefore,  $\sigma^{\infty}_s = \sigma^1_s$ for all $s \in [0,1]$,
		$$
			\tRoot^{\infty}_s 
			~=~ 
			\inf\left\{ t\geq 0:~ (t,W_t) \in \mathcal{R}^1_s \right\}
			~=~ 
			\inf\left\{ t\geq  \tRoot^{\infty}_{s-}:~ (t,W_t) \in \mathcal{R}^1_s \right\}.
		$$
		
		\noindent $\mathrm{(ii)}$ 
		Let $p: [0,1] \to [0, \frac12]$ be an increasing function,
		the family of marginal distributions $(\mu_s)_{s \in [0,1]}$ is defined by
		$$
			\mu_s (dx)~:=~ \big(1 - 2 p(s) \big) \delta_{0}  (dx) + p(s) \delta_{1} (dx) + p(s) \delta_{-1}(dx).
		$$
		Then it is clear that $(\mu_s)_{s \in [0,1]}$ is a family of marginals increasing in convex order.
		For each $s \in [0,1]$, the Root solution $\sigma^1_s$ to the one-marginal SEP with distribution $\mu_s$ is given by the hitting time
		$$
			\sigma^1_s ~=~ \inf \big\{ t \ge 0 ~: (t, W_t) \in \Rc^1_s \big\},
		$$
		where $\Rc^1_s$ takes the form, for some function $R_1: [0, 1] \to \R$,
		$$
			\Rc^1_s ~=~
			\big( [0, \infty] \x [1, \infty) \big) 
			~\cup~
			\big( [0, \infty] \x (- \infty, -1] \big) 
			~\cup~ 
			\big( [R_1(s), \infty] \x \{0\} \big).
		$$
		As $p(s)$ is an increasing function, it is easy to see that the function $R_1(s)$ is also increasing\footnote{In fact, one can compute explicitly the value of $R_1(s)$ for each $s \in [0,1]$,
		since the joint distribution of $ ( W_t, \min_{0 \le r \le t} W_r, \max_{0 \le r \le t} W_r)$ for each $t \ge 0$ can be described through an infinite series (see e.g. \cite{he1998double}).},
		so that one has
		 $\Rc^1_{s_1} \supset \Rc^1_{s_2}$ whenever $s_1 \le s_2$.
		In this case, one has, $\sigma^{\infty}_s = \sigma^1_s$ for all $s \in [0,1]$ so that
		$$
			\tRoot^{\infty}_s 
			~=~ 
			\inf\left\{ t\geq 0:~ (t,W_t) \in \mathcal{R}^1_s \right\}
			~=~ 
			\inf\left\{ t\geq  \tRoot^{\infty}_{s-}:~ (t,W_t) \in \mathcal{R}^1_s \right\}.
		$$
	\end{example}

	\begin{remark}
		It would be very interesting to formulate (sufficient) conditions on $(\mu_s)_{s \in [0,1]}$ so that the one-marginal Root barriers for $\mu_s$ are automatically ordered (as in \cite{AzemaYor} for Az\'ema-Yor embedding).
	\end{remark}	

	In the general case, when the convergence of set-valued process $\{ (\Rc^n_s)_{s \in [0,1]}, n \ge 1\}$ is unclear,
	one can still fix a constant $s \in [0,1]$, and consider the limit of $\{ \Rc^n_s, n \ge 1\}$.
	Indeed, for each fixed $s \in [0,1]$, by a tightness argument as in \cite{Root}, $\Rc^n_s$ converges to some limit barrier $\Rc^{\infty}_s$ along some subsequence.
	Combining the limit argument in Section \ref{subsec:cvg}, one has a countable set $N \subset [0,1)$ such that 
	$s \mapsto \sigma^{\infty}_s$ is continuous on $[0,1] \setminus N$,
	and $(\sigma^n_s, W_{\sigma^n_s}) \to (\sigma^{\infty}_s, W_{\sigma^{\infty}_s})$ weakly for every $s \in [0,1] \setminus N$, so that 
	$$
		(\sigma^{\infty}_s, W_{\sigma^{\infty}_s}) ~\in~ \Rc^{\infty}_s,
		~~\mbox{for every}~s \in [0,1] \setminus N.
	$$
	Nevertheless, this would not provide new information for \eqref{eq:faux_def_Root_full_marg}.
	In fact, notice that for a family $(\mu_s)_{s \in [0,1]}$ non-decreasing in convex order,
	the map $s \mapsto \mu_s$ is continuous except on at most countably many points,
	and in our context under Assumption \ref{assum:U}, $s \mapsto \mu_s$ is continuous on $[0,1]$.
	It follows that for a Root solution $((\sigma^{\infty}_s)_{s \in [0,1]}, W)$ with full marginals $(\mu_s)_{s \in [0,1]}$,
	one has
	$$
		\mathcal{L} \big ( W_{\sigma^{\infty}_{s-}} \big) 
		~=~
		\mathcal{L} \big( \lim_{r \nearrow s} W_{\sigma^{\infty}_r} \big)
		~=~
		\lim_{r \nearrow s} \mathcal{L} \big(W_{\sigma^{\infty}_r} \big)
		~=~
		\mu_s
		~=~
		\mathcal{L} \big ( W_{\sigma^{\infty}_{s}} \big).
	$$
	As $\sigma^{\infty}_{s-}$ is also a stopping time as supremum of a sequence of stopping times,
	$\sigma^{\infty}_{s-} \le \sigma^{\infty}_s$, and $(W_{\sigma^{\infty}_s \wedge t})_{t \ge 0}$ is uniformly integrable,
	it follows that $\sigma^{\infty}_{s-} = \sigma^{\infty}_s$, a.s. (\cite[Theorem 3]{Monroe}).
	This implies that, for every fixed $s \in [0,1]$, the characterization in \eqref{eq:faux_def_Root_full_marg} 
	is equivalent to
	\begin{equation} \label{eq:Root_charact}
		\tRoot^{\infty}_s ~=~ \inf\left\{ t\geq \tRoot^{\infty}_{s}:~ (t,W_t) \in \mathcal{R}_s \right\},
		~\mbox{a.s.}
	\end{equation}
	In particular, \eqref{eq:Root_charact} holds always true if we set $\mathcal{R}_s := [0, \infty] \x \R$,
	and in this sense  \eqref{eq:faux_def_Root_full_marg}  or  \eqref{eq:Root_charact}  would not really provide any information as characterization of the limit Root solution.

\section*{Acknowledgments}
The authors are grateful to two anonymous referees for helpful suggestions and comments.

\bibliographystyle{abbrvnat}

\begin{thebibliography}{33}
\providecommand{\natexlab}[1]{#1}
\providecommand{\url}[1]{\texttt{#1}}
\expandafter\ifx\csname urlstyle\endcsname\relax
  \providecommand{\doi}[1]{doi: #1}\else
  \providecommand{\doi}{doi: \begingroup \urlstyle{rm}\Url}\fi

\bibitem[Az{\'e}ma and Yor(1979)]{AzemaYor}
J.~Az{\'e}ma and M.~Yor.
\newblock {Une solution simple au probl\`eme de Skorokhod}.
\newblock In \emph{S{\'e}minaire de probabilit{\'e}s XIII}, volume Lecture
  notes in Mathematics, vol. 721, pages 90--115. Springer, 1979.

\bibitem[Barles and Jakobsen(2002)]{barles2002convergence}
G.~Barles and E.~R. Jakobsen.
\newblock {On the convergence rate of approximation schemes for
  Hamilton-Jacobi-Bellman equations}.
\newblock \emph{ESAIM: Math. Model. Numer. Anal.}, 36\penalty0 (1):\penalty0
  33--54, 2002.

\bibitem[Beiglb{\"o}ck and Juillet(2016)]{BJ}
M.~Beiglb{\"o}ck and N.~Juillet.
\newblock On a problem of optimal transport under marginal martingale
  constraints.
\newblock \emph{Ann. Probab.}, 44\penalty0 (1):\penalty0 42--106, 2016.

\bibitem[Beiglb{\"o}ck et~al.(2017{\natexlab{a}})Beiglb{\"o}ck, Cox, and
  Huesmann]{BCH}
M.~Beiglb{\"o}ck, A.~M. Cox, and M.~Huesmann.
\newblock {Optimal transport and Skorokhod embedding}.
\newblock \emph{Invent. Math.}, 208\penalty0 (2):\penalty0 327--400,
  2017{\natexlab{a}}.

\bibitem[Beiglb{\"o}ck et~al.(2017{\natexlab{b}})Beiglb{\"o}ck, Cox, and
  Huesmann]{BCH_n}
M.~Beiglb{\"o}ck, A.~M. Cox, and M.~Huesmann.
\newblock {The geometry of multi-marginal Skorokhod Embedding}.
\newblock \emph{arXiv preprint arXiv:1705.09505}, 2017{\natexlab{b}}.

\bibitem[Brown et~al.(2001)Brown, Hobson, and Rogers]{BHR}
H.~Brown, D.~Hobson, and L.~Rogers.
\newblock The maximum maximum of a martingale constrained by an intermediate
  law.
\newblock \emph{Probab. Theory Related Fields}, 119\penalty0 (4):\penalty0
  558--578, 2001.

\bibitem[Claisse et~al.(2017)Claisse, Guo, and Henry-Labord{\`e}re]{CGHL}
J.~Claisse, G.~Guo, and P.~Henry-Labord{\`e}re.
\newblock {Some results on Skorokhod embedding and robust hedging with local
  time}.
\newblock \emph{J. Optim. Theory Appl.}, pages 1--29, 2017.

\bibitem[Cox and Wang(2013)]{CoxWang}
A.~M.~G. Cox and J.~Wang.
\newblock {Root's barrier: Construction, optimality and applications to
  variance options}.
\newblock \emph{Ann. Appl. Probab.}, 23\penalty0 (3):\penalty0 859--894, 2013.

\bibitem[Cox et~al.(2019)Cox, Ob{\l}{\'o}j, and Touzi]{COT}
A.~M.~G. Cox, J.~Ob{\l}{\'o}j, and N.~Touzi.
\newblock {The Root solution to the multi-marginal embedding problem: an
  optimal stopping and time-reversal approach}.
\newblock \emph{Probab. Theory Related Fields}, 173\penalty0 (1-2):\penalty0
  211--259, 2019.

\bibitem[Crandall et~al.(1992)Crandall, Ishii, and Lions]{crandall1992user}
M.~G. Crandall, H.~Ishii, and P.-L. Lions.
\newblock User's guide to viscosity solutions of second order partial
  differential equations.
\newblock \emph{Bull. Amer. Math. Soc.}, 27\penalty0 (1):\penalty0 1--67, 1992.

\bibitem[El~Karoui and Tan(2013)]{ElKarouiTan}
N.~El~Karoui and X.~Tan.
\newblock {Capacities, measurable selection and dynamic programming Part II:
  application in stochastic control problems}.
\newblock \emph{arXiv preprint arXiv:1310.3364}, 2013.

\bibitem[Gassiat et~al.(2015{\natexlab{a}})Gassiat, Mijatovi{\'c}, and
  Oberhauser]{GMO}
P.~Gassiat, A.~Mijatovi{\'c}, and H.~Oberhauser.
\newblock {An integral equation for Root's barrier and the generation of
  Brownian increments}.
\newblock \emph{Ann. Appl. Probab.}, 25\penalty0 (4):\penalty0 2039--2065,
  2015{\natexlab{a}}.

\bibitem[Gassiat et~al.(2015{\natexlab{b}})Gassiat, Oberhauser, and dos
  Reis]{GOR}
P.~Gassiat, H.~Oberhauser, and G.~dos Reis.
\newblock {Root's barrier, viscosity solutions of obstacle problems and
  reflected FBSDEs}.
\newblock \emph{Stochastic Process. Appl.}, 125\penalty0 (12):\penalty0
  4601--4631, 2015{\natexlab{b}}.

\bibitem[Guo et~al.(2016)Guo, Tan, and Touzi]{GTT_SEP}
G.~Guo, X.~Tan, and N.~Touzi.
\newblock {Optimal Skorokhod embedding under finitely many marginal
  constraints}.
\newblock \emph{SIAM J. Control Optim.}, 54\penalty0 (4):\penalty0 2174--2201,
  2016.

\bibitem[Guo et~al.(2017)Guo, Tan, and Touzi]{GTT_S_topology}
G.~Guo, X.~Tan, and N.~Touzi.
\newblock {Tightness and duality of martingale transport on the Skorokhod
  space}.
\newblock \emph{Stochastic Process. Appl.}, 127\penalty0 (3):\penalty0
  927--956, 2017.

\bibitem[He et~al.(1998)He, Keirstead, and Rebholz]{he1998double}
H.~He, W.~P. Keirstead, and J.~Rebholz.
\newblock Double lookbacks.
\newblock \emph{Mathematical Finance}, 8\penalty0 (3):\penalty0 201--228, 1998.

\bibitem[Henry-Labord{\`e}re and Touzi(2016)]{HLT_Brenier}
P.~Henry-Labord{\`e}re and N.~Touzi.
\newblock {An explicit martingale version of the one-dimensional Brenier
  theorem}.
\newblock \emph{Finance Stoch.}, 20\penalty0 (3):\penalty0 635--668, 2016.

\bibitem[Henry-Labord{\`e}re et~al.(2016)Henry-Labord{\`e}re, Tan, and
  Touzi]{HLTT_Brenier}
P.~Henry-Labord{\`e}re, X.~Tan, and N.~Touzi.
\newblock {An explicit martingale version of the one-dimensional Brenier's
  Theorem with full marginals constraint}.
\newblock \emph{Stochastic Process. Appl.}, 126\penalty0 (9):\penalty0
  2800--2834, 2016.

\bibitem[Hirsch and Roynette(2012)]{HirschRoynette}
F.~Hirsch and B.~Roynette.
\newblock {A new proof of Kellerer's theorem}.
\newblock \emph{ESAIM: Probability and Statistics}, 16:\penalty0 48--60, 2012.

\bibitem[Hirsch et~al.(2011)Hirsch, Profeta, Roynette, and Yor]{HPRY}
F.~Hirsch, C.~Profeta, B.~Roynette, and M.~Yor.
\newblock \emph{Peacocks and associated martingales, with explicit
  constructions}.
\newblock Springer Science \& Business Media, 2011.

\bibitem[Hobson(2011)]{Hobson}
D.~Hobson.
\newblock {The Skorokhod embedding problem and model-independent bounds for
  option prices}.
\newblock In \emph{Paris-Princeton Lectures on Mathematical Finance 2010},
  pages 267--318. Springer, 2011.

\bibitem[Hobson(2016)]{HobsonPeacock}
D.~Hobson.
\newblock Mimicking martingales.
\newblock \emph{Ann. Appl. Probab.}, 26\penalty0 (4):\penalty0 2273--2303,
  2016.

\bibitem[K{\"a}llblad et~al.(2017)K{\"a}llblad, Tan, and Touzi]{KTT}
S.~K{\"a}llblad, X.~Tan, and N.~Touzi.
\newblock {Optimal Skorokhod embedding given full marginals and Az{\'e}ma--Yor
  peacocks}.
\newblock \emph{Ann. Appl. Probab.}, 27\penalty0 (2):\penalty0 686--719, 2017.

\bibitem[Kellerer(1972)]{Kellerer}
H.~G. Kellerer.
\newblock Markov-komposition und eine anwendung auf martingale.
\newblock \emph{Math. Ann.}, 198\penalty0 (3):\penalty0 99--122, 1972.

\bibitem[Madan and Yor(2002)]{MadanYor}
D.~B. Madan and M.~Yor.
\newblock {Making Markov martingales meet marginals: with explicit
  constructions}.
\newblock \emph{Bernoulli}, 8\penalty0 (4):\penalty0 509--536, 2002.

\bibitem[Monroe(1972)]{Monroe}
I.~Monroe.
\newblock {On embedding right continuous martingales in Brownian motion}.
\newblock \emph{Ann. Math. Statist.}, pages 1293--1311, 1972.

\bibitem[Ob{\l}{\'o}j(2004)]{Obloj}
J.~Ob{\l}{\'o}j.
\newblock {The Skorokhod embedding problem and its offspring}.
\newblock \emph{Probab. Surveys}, 1:\penalty0 321--392, 2004.

\bibitem[Ob{\l}{\'o}j and Spoida(2017)]{OS}
J.~Ob{\l}{\'o}j and P.~Spoida.
\newblock {An iterated Az{\'e}ma--Yor type embedding for finitely many
  marginals}.
\newblock \emph{Ann. Probab.}, 45\penalty0 (4):\penalty0 2210--2247, 2017.

\bibitem[Perkins(1986)]{Perkins}
E.~Perkins.
\newblock {The Cereteli-Davis solution to the H1-embedding problem and an
  optimal embedding in Brownian motion}.
\newblock In \emph{Seminar on stochastic processes, 1985}, pages 172--223.
  Springer, 1986.

\bibitem[Root(1969)]{Root}
D.~H. Root.
\newblock {The existence of certain stopping times on Brownian motion}.
\newblock \emph{Ann. Math. Statist.}, 40\penalty0 (2):\penalty0 715--718, 1969.

\bibitem[Rost(1971)]{Rost}
H.~Rost.
\newblock {The stopping distributions of a Markov process}.
\newblock \emph{Invent. Math.}, 14\penalty0 (1):\penalty0 1--16, 1971.

\bibitem[Skorokhod(1965)]{Skorokhod}
A.~V. Skorokhod.
\newblock \emph{Studies in the theory of random processes}.
\newblock Addison-Wesley Publishing Co. Inc., Reading, Mass., 1965.
\newblock Translated from the Russian by Scripta Technica.

\bibitem[Vallois(1983)]{Vallois}
P.~Vallois.
\newblock {Le probl\`eme de Skorokhod sur $\R$: une approche avec le temps
  local}.
\newblock In \emph{S{\'e}minaire de Probabilit{\'e}s XVII 1981/82}, pages
  227--239. Springer, 1983.

\end{thebibliography}

\end{document}